\newcommand{\R}{\mathbb{R}}
\DeclareMathOperator*{\argmin}{\arg \min}%
\DeclareMathOperator*{\st}{ \quad \textnormal{ s.t. }}%
\newtheorem{lemma}{Lemma}
\newtheorem{proposition}{Proposition}
\newtheorem{example}{Example}
\newtheorem{corollary}{Corollary}
\theoremstyle{definition}
\newtheorem{definition}{Definition}
\theoremstyle{remark}
\newtheorem*{remark}{Remark}
\newcommand{\importantBox}[1]{%
	\vspace{0pt}
	\begin{center}%
		\setlength{\fboxrule}{2pt}%
		\fbox{%
			\begin{minipage}{0.45\textwidth}%
				#1
			\end{minipage}%
		}%
	\end{center}%
	\vspace{0pt}
}
\begin{document}

\title{Parametric Majorization for Data-Driven Energy Minimization Methods}

\author{Jonas Geiping \qquad Michael Moeller\\
Department of Electrical Engineering and Computer Science, University of Siegen\\
{\tt\small \{jonas.geiping, michael.moeller\}@uni-siegen.de}
}

\maketitle

\begin{abstract}
Energy minimization methods are a classical tool in a multitude of computer vision applications. While they are interpretable and well-studied, their regularity assumptions are difficult to design by hand. Deep learning techniques on the other hand are purely data-driven, often provide excellent results, but are very difficult to constrain to predefined physical or safety-critical models. A possible combination between the two approaches is to design a parametric energy and train the free parameters in such a way that minimizers of the energy correspond to desired solution on a set of training examples. Unfortunately, such formulations typically lead to bi-level optimization problems, on which common optimization algorithms are difficult to scale to modern requirements in data processing and efficiency. 
In this work, we present a new strategy to optimize these bi-level problems. We investigate surrogate single-level problems that majorize the target problems and can be implemented with existing tools, leading to efficient algorithms without collapse of the energy function. This framework of strategies enables new avenues to the training of parameterized energy minimization models from large data.
\end{abstract}

\section{Introduction}

Energy minimization methods, also referred to as variational methods, are a classical tool in computer vision \cite{rudin_nonlinear_1992,chan_active_2001,cremers_convex_2011,chambolle_introduction_2010}. The idea is to define a data-dependent cost function $E$ that assigns a value to each candidate solution $x$. The desired optimal solution is then the target solution with the lowest energy value. This methodology has several advantages, for one, it is characterized by an \emph{explicit model} - namely the energy function to be minimized - and an implicit inference method - how we compute the minimizer of this energy is a separate problem. This duality allows a fruitful analysis, leading to controllable methods with provable guarantees that are paramount in many critical applications \cite{rohlfing_volume-preserving_2003,ritschl_improved_2011,zhan_safe_2017}. Furthermore, explicit knowledge over the model structure allows for explainable and clear modifications when the method is applied in a related task \cite{chen_insights_2014}.

Conversely, deep learning approaches \cite{lecun_deep_2015}, specifically deep feed-forward neural networks work by very different principles. The methodology of deep learning is characterized by \emph{implicit} models and explicit inference. The solution to the problem at hand is given directly by the output of the learned feed-forward structure. This is advantageous in practice and crucial for the efficient training of neural networks, however the underlying model of the problem structure is now only implicitly contained in the responses of the network. Deep neural networks have fundamentally changed the state-of-the-art in various computer vision applications, due to these properties as the inference operations are learned directly from large amounts of training data. These approaches are able to learn expressive and convincing mechanisms, examples of which can be found not only in recognition tasks (e.g. \cite{krizhevsky_imagenet_2012}), but also in denoising \cite{zhang_beyond_2017}, optical flow \cite{mayer_large_2016,ilg_flownet_2017} or segmentation tasks \cite{long_fully_2015,ronneberger_u-net:_2015,chen_semantic_2015}. Yet, as the underlying model is only implicitly defined and 'hidden' in the network structure, it is difficult to modify it for applications in other domains or to guarantee specific outputs. Domain adaptation is still an active field of research and several examples, for instance in medical imaging \cite{antun_instabilities_2019,finlayson_adversarial_2018}, have demonstrated the need for possibly model-based physically plausible output restrictions. This problem is most strikingly demonstrated by the phenomenon of adversarial examples \cite{szegedy_intriguing_2013} - the existence of input data, that, when fed through the network, leads to highly erroneous solutions. While one would expect that such behaviour is possibly unavoidable in recognition tasks \cite{shafahi_are_2018,metzen_universal_2017-1}, it should not be a factor in low-level computer vision applications.

Reviewing these two methodologies, we would - of course - prefer to have the best of both worlds. We would like to use both the large amounts of data at our disposal and our far-reaching domain knowledge in many tasks to train explicit models with a significant number of free parameters, so that their optimal solutions are similar to directly trained feed-forward networks.

A promising candidate for such a combination of learning- and model based approaches are \textit{parametrized energy minimization methods}. The idea of such methods is to define an energy $E$ that depends on the candidate solutions $x$, the input data $y$ and parameters $\theta$,
\begin{align}\label{eq:energy}
\begin{split}
E:\R^n \times \R^m \times \R^s ~&\to \R,\\
(x,y,\theta) ~&\mapsto E(x,y,\theta),
\end{split}
\end{align}
such that for a good choice of parameters $\theta$, the argument $x(\theta)=\arg\min_x E(x,y,\theta)$ that minimizes the energy over all $x$ is as close a possible to the desired true solution $x^*$. 


To train such parametric energies, assume we are given $N$ training samples $\lbrace(x^*_i,y_i)\rbrace_{i=1}^N$ and a continuous \emph{higher-level} loss function $l : \R^n \times \R^n \to \R$, which measures the deviation of solutions of the model to the given training samples. Determining the optimal parameters $\theta$ then becomes a \textit{bi-level optimization problem} combining both the higher-level loss function and the lower-level energy,
\begin{align}
\label{eq:bilevelProblemUpper}
   & \min_{\theta \in \R^s} \sum_{i=1}^N l(x_i^*,x_i(\theta)),  \\
   \label{eq:bilevelProblemLower}
    \textnormal{subject to }\qquad & x_i(\theta) = \argmin_{x \in \R^n} E(x, y_i,\theta).
\end{align}
Usual first-order learning methods are difficult to apply in this setting. For every gradient computation it is necessary to compute a derivative of the $\argmin$ operation of the lower-level problem, which is even further complicated if we consider parametrized non-smooth energy models which are wide-spread in computer vision \cite{cremers_convex_2011,chambolle_introduction_2010}.

Therefore, the goal of this paper is to analyze bi-level optimization problems and identify strategies that allow for efficient approximate solutions. We investigate 
single-level minimization problems with simple constraints without second-order differentiation, which are applicable even to non-smooth energies. Such forms allow scaling the previously limited training of energy minimization methods in computer vision to larger datasets and increase the effectiveness in applications where it is critical that the solution follows a specific model structure.


In the remainder of this paper we analyze the bi-level optimization problem to develop a rigorous understanding of sufficient conditions for a single-level surrogate strategy for continuous loss functions $l$ and convex, non-smooth lower-level energies $E$ to be successful. We introduce the concept of a \emph{parametric majorization function}, show relations to structured support vector machines and provide several levels of parametric majorization functions with varying levels of exactness and computational effort. We extend our approximations to an iterative scheme, allowing for repeated evaluations of the approximation, before illustrating the proposed strategies in computer vision applications. 

\section{Related Work}
%
%
%
%

The straightforward way of optimizing bi-level problems is to consider \emph{direct descent methods} \cite{kolstad_derivative_1990,savard_steepest_1994,colson_overview_2007}. These methods directly differentiate the higher-level loss function with respect to the minimizing argument and descend in the direction of this gradient. An incomplete list of examples in image processing is \cite{calatroni_bilevel_2017,chen_insights_2014,chen_learning_2012,chen_bi-level_2014-1,de_los_reyes_structure_2016,de_los_reyes_bilevel_2017,gould_differentiating_2016,hintermuller_optimal_2017,hintermuller_optimal_2017-1}. This strategy requires both the higher- and lower-level problems to be smooth and the minimizing map to be invertible. This is usually facilitated by implicit differentiation, as discussed in \cite{samuel_learning_2009,kunisch_bilevel_2013,chen_bi-level_2014-1,chen_insights_2014}.
In more generality, the problem of directly minimizing $\theta$ without assuming that smoothness in $E$ leads to optimization problems with equilibrium constraints (MPECs), see \cite{bennett_bilevel_2008} for a discussion in terms of machine learning or  \cite{dempe_is_2012,dempe_foundations_2002,dempe_bilevel_2015} and  \cite{colson_overview_2007}. 
This approach also applies to the optimization layers of \cite{amos_optnet:_2017-1}, which lend themselves well to a reformulation as a bi-level optimization problem.

\emph{Unrolling} is a prominent strategy in applied bi-level optimization across fields, i.e.  MRF literature \cite{barbu_learning_2009,martins_polyhedral_2009} in deep learning \cite{zheng_conditional_2015,chen_deeplab:_2016,chandra_fast_2016-1,lin_efficient_2016} and in variational settings \cite{ochs_bilevel_2015,larsson_revisiting_2018,larsson_projected_2018,hammernik_learning_2018,hammernik_deep_2017,riegler_atgv-net:_2016}. The problem is transformed into a single level problem by choosing an optimization algorithm $\mathcal{A}$ that produces an approximate solution to the lower level problem after a fixed number of iterations. $x(\theta)$ is then replaced by $\mathcal{A}(y,\theta)$. Automatic differentiation \cite{griewank_evaluating_2000} allows for an efficient evaluation of the gradient of the upper-level loss w.r.t to this reduced objective
\begin{equation}
	 \min_\theta \sum_{i=1}^N l(x_i^*,\mathcal{A}(y_i,\theta)).
\end{equation}
In general these strategies are very successful in practice, \emph{because} they combine the model and its optimization method into a single feed-forward process, where the model is again only implicitly present. 
Later works \cite{chen_learning_2015,chen_trainable_2017, hammernik_learning_2018,hammernik_deep_2017} allow the lower-level parameters to change in between the fixed number of iterations, leading to structures that model differential equations and stray further from underlying modelling. As pointed out in \cite{klatzer_learning_2016}, these strategies are more aptly considered as a set of nested quadratic lower-level problems.

Several techniques have been developed in the field of structured support vector machines (SSVMs) \cite{taskar_max-margin_2004, collins_discriminative_2002, altun_hidden_2003, tsochantaridis_large_2005} that are very relevant to the task of learning energy models, as SSVMs can be understood as bi-level problems with a lower-level energy that is linear in $\theta$ and often a non-continuous higher-level loss. Various strategies such as  margin rescaling \cite{taskar_max-margin_2004}, slack rescaling \cite{tsochantaridis_large_2005, wiseman_sequence--sequence_2016}, softmax-margins \cite{gimpel_softmax-margin_2010} exist and have also been applied recently in the training of computer vision models in \cite{knobelreiter_end--end_2017,colovic_end--end_2017}, we will later return to their connection to the investigated strategies.
\section{Bi-Level Learning}

We now formalize our learning problem. 
We assume the lower-level energy $E$ from~\eqref{eq:energy} to be convex (but not necessarily smooth) in its first variable $x \in \R^n$ and to depend continuously on input data $y \in \R^m$ and parameters $\theta \in \R^s$. We assume its minimizer $x(\theta)$ to be unique.
For our higher-level loss function \eqref{eq:bilevelProblemUpper} $l : \R^n \times \R^n \to \R$, we assume that it fulfills $l(x,y) \geq 0, l(x,x) = 0$ for all $x,y$ and is differentiable in its second argument. 

Note that this formulation of bi-level optimization problems directly generalizes classical supervised (deep) learning with a network $\mathcal{N}(\theta, y)$ via the quadratic energy $E(x,y_i,\theta) = \frac{1}{2}||x-\mathcal{N}(\theta, y_i)||^2$, for which $x_i(\theta) = \mathcal{N}(\theta, y_i)$.

\emph{Preliminaries (Convex Analysis):} Let us summarize our notation and some fundamental results from convex analysis. We refer the reader to \cite{bauschke_convex_2011} for more details. 
We denote by $\partial E(x)$ the set of subgradients of a convex function $E$ at $x$. We define the Bregman distance between two vectors relative to a convex function $E$ by $D_E^p(x,y) = E(x) - E(y) - \langle p, x-y \rangle$ for a subgradient $p \in \partial E(y)$, intuitively the Bregman distance measures the difference of the energy at $x$ to its linear lower bound around $y$. $E^*(p) = \sup_x \langle p,x\rangle -E(x)$ is the convex conjugate of $E$. $x$ is a minimizer of the energy $E$ if and only if $0 \in \partial E(x)$ or equivalently by convex duality $x \in \partial E^*(0)$.
$E$ is $m$-strongly convex if $D_E^p(x,y) \geq \frac{m}{2}||x-y||^2$ for all $x,y$. Conversely, if $E$ is $m$-strongly convex, then $E^*$ is $\frac{1}{m}$-strongly smooth, i.e. $D_{E^*}(p,q) \leq \frac{2}{m}||p-q||^2$. Furthermore $D_E^p(x,y) = D_{E^*}^x(p,q), q \in \partial E(x)$ holds for all Bregman distances \cite{burger_bregman_2016}. We consider parametrized energies in several variables, yet we always assume (sub)-gradients, Bregman distances and convex conjugates to be with respect to the first argument $x$.

%

\subsection{Majorization of Bi-level Problems}
As previously discussed, directly solving the bi-level problem as posed in Eq.~\eqref{eq:bilevelProblemUpper} and \eqref{eq:bilevelProblemLower} is tricky. We need to implicitly differentiate the minimizing argument $x_i(\theta)$ for all $N$ samples just to apply a first-order method in $\theta$ - which is in stark contrast to our goal of finding efficient and scalable algorithms.

Let us 
instead look at the problem from a very different angle and entertain the idea that the loss function $l$ is actually of secondary importance to us. We really only want to find parameters $\theta$ so that our training samples are well reconstructed, $x_i^* \approx x_i(\theta)$. If we go so far as to assume that the loss value of our optimal parameters $\theta^*$ is zero, meaning that minimizers of our energy are perfectly able to reconstruct our training samples, then the bi-level problem is reduced to a single-level problem, inserting $x_i^* = x_i(\theta^*)$:
\begin{equation}\label{eq:single_level_gradient}
    \min_\theta \st 0 \in \partial E(x_i^*,y_i,\theta),
\end{equation}
which we could solve via
\begin{equation}\label{eq:gradient_penalty}
    \min_\theta \sum_{i=1}^N ||q_i||^2 \st q_i \in \partial E(x_i^*,y_i,\theta)
\end{equation}
This train of thought is closely interconnected to the notion of separability in Support Vector Machine methods \cite{vapnik_statistical_1998}, where it is assumed that given training samples are linearly separable, which is equivalent to assuming that the classification loss is zero on the training set.

However minimizing Eq.~\eqref{eq:gradient_penalty} is often not a good choice. A simple example is $E(x,y,\theta) = (\theta x -y)^2$, i.e. we simply try to learn a positive scaling factor $\theta$ between $x$ and $y$. Problem \eqref{eq:single_level_gradient} can then be written as $ \min_\theta \sum_i (\theta^2 x_i^* - \theta y_i)^2$ and is trivially minimized by $\theta=0$. Such a solution makes $E$ independent of $x$ such that every $x$ becomes a minimizer. This phenomenon is referred to as \emph{collapse} of the energy function \cite{lecun_loss_2005,lecun_tutorial_2006} in machine learning literature, and clearly cannot be a good strategy to learn a scaling factor. 


Interestingly, the scaling problem can be reformulated into a reasonable (non-collapsing) problem, if we require \eqref{eq:gradient_penalty} to \textit{majorize} the bilevel problem: If we consider the higher-level loss function $l(x_i^*,x_i(\theta)) = (x_i^*-x_i(\theta))^2$, then our surrogate problem $\sum_i (\theta^2 x_i^* - \theta y_i)^2$ is clearly not a majorizer for arbitrary $\theta$. However, if we consider a reformulation of the energy to $E(x) = (x - \frac{1}{\theta}y)^2$, then this reformulation leads to a \emph{majorizing} surrogate $ \sum_i (x_i^* - \frac{1}{\theta}y_i)^2$. Minimizing $\theta$ now leads to learning the desired scaling factor.


Our toy example motivates us to formalize the concept of majorizing surrogates:
\begin{definition}[Parametrized Majorizer]\label{def:maj}
    Given a bi-level optimization problem in the higher level loss $l(x,y)$ and lower-level energy $E(x,y,\theta)$, we call the function $S(x, y, \theta) : \R^n \times \R^m \times \R^s \to \R$ a parametrized majorizer, if
    \begin{align*}
        &\forall \theta \in \R^s:  &&l(x,x(\theta)) \leq S(x, y, \theta) \quad \\
        &\forall \theta \in \R^s \st &&l(x,x(\theta)) = 0 \implies S(x,y, \theta) = 0
    \end{align*}
    hold for any $x,y \in \R^n \times \R^m$.
\end{definition}

This definition allows us to formalize our objective further. We investigate replacing the bi-level optimization problem \eqref{eq:bilevelProblemUpper}, \eqref{eq:bilevelProblemLower} by the minimization of a suitable parametrized majorizer, i.e.  
\begin{align}
\label{eq:ourApproachGeneric}
\min_{\theta \in \mathbb{R}^s} \sum_{i=1}^N S(x_i^*,y_i,\theta). 
\end{align}
An immediate conclusion of Definition \ref{def:maj} is that the function $S$ now certifies our progress as $S(x,y, \theta) = 0$ implies $l(x,x(\theta)) = 0$. Moreover, our goal is to choose majorizers $S$ in such a way that they yield \textbf{single-level} problems \eqref{eq:ourApproachGeneric}, meaning it is not necessary to differentiate an $\argmin$ operation to minimize them or to solve an equally difficult reformulation, making them significantly easier to solve. 
\subsection{Single-Level Majorizers}\label{sec:single-level}
One possible way to find a majorizer that satisfies the previously postulated properties is by considering the majorizer naturally induced through the Bregman distance of the lower level energy. We assume the following condition
\begin{equation} \label{eq:blanket_assumption}
    l(x,z) \leq D_{E_\theta}(x,z) \quad \forall x,z \in \R^n, \theta \in \R^s,
\end{equation}
and propose the surrogate problem
\begin{equation}\label{eq:surrogate_1}
    \min_\theta ~ \sum_{i=1}^N D_{E_\theta} \left( x_i^*,x_i(\theta) \right).
\end{equation}
Condition \eqref{eq:blanket_assumption} is an assumption on both the loss function and the energy. It thus delineates the class of bi-level problems that can be attacked with this majorization strategy. However this condition is quite general. For a large class of loss functions, we only need the energy to contain a term that also induces the loss function, a property also known as (relative) strong convexity \cite{teboulle_simplified_2018,lu_relatively_2018}:
\begin{proposition}\label{prop:relatively_convex_cond}
If the loss function $l(x,y)$ is a Bregman distance induced by a strictly convex function $w:\R^n \to \R$, i.e. $l(x,y) = D_w(x,y)$, then assumption \eqref{eq:blanket_assumption} is fulfilled if the energy $E$ is $w$-strongly convex, i.e. if $E(x) -w(x)$ is still a convex function.
\end{proposition}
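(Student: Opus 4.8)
The plan is to exploit the additivity of Bregman distances under sums of convex functions. Write the lower-level energy as $E_\theta = w + R_\theta$ with $R_\theta := E_\theta - w$; by the $w$-strong convexity hypothesis $R_\theta$ is convex, and since $E_\theta$ and $w$ are finite-valued on all of $\R^n$, so is $R_\theta$.

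First I would invoke the Moreau--Rockafellar sum rule: because $w$ and $R_\theta$ are finite-valued convex functions on $\R^n$, no constraint qualification is needed and $\partial E_\theta(z) = \partial w(z) + \partial R_\theta(z)$ for every $z$. Hence any subgradient $p \in \partial E_\theta(z)$ used to form the Bregman distance decomposes as $p = p_w + p_R$ with $p_w \in \partial w(z)$ and $p_R \in \partial R_\theta(z)$. Plugging this into the definition $D_E^p(x,z) = E(x) - E(z) - \langle p, x - z\rangle$ and regrouping the $w$-terms and the $R_\theta$-terms separately yields
\[
D_{E_\theta}^{p}(x,z) = D_w^{p_w}(x,z) + D_{R_\theta}^{p_R}(x,z).
\]
Since $R_\theta$ is convex, its affine minorant at $z$ lies below $R_\theta$ everywhere, so $D_{R_\theta}^{p_R}(x,z) \ge 0$. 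Therefore $D_{E_\theta}^{p}(x,z) \ge D_w^{p_w}(x,z) = l(x,z)$ for every $x,z$, and since the argument is uniform in $\theta$, this establishes exactly assumption~\eqref{eq:blanket_assumption}.

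The only point that needs a little care — and the main "obstacle" — is the subgradient bookkeeping: the Bregman distance is defined relative to a chosen subgradient, so one must verify that the $p_w$ produced by the split is a genuine element of $\partial w(z)$ (so that $D_w^{p_w}$ is indeed the intended loss $l$), not merely some vector; this is precisely what the finite-valued sum rule guarantees. When $w$ is differentiable — the typical case, e.g. $w(x) = \tfrac12\|x\|^2$ producing the quadratic loss — the decomposition is unique and the conclusion is immediate. I would close with the remark that strict convexity of $w$ is used only to ensure $l$ qualifies as a loss in the sense assumed earlier (nonnegative, vanishing exactly on the diagonal); it plays no role in the majorization inequality itself.
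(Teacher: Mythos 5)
Your proof is correct and follows essentially the same route as the paper: decompose $E_\theta = w + (E_\theta - w)$, use additivity of Bregman distances, and drop the nonnegative term $D_{E_\theta - w} \geq 0$ coming from convexity of the remainder. The extra care you take with the Moreau--Rockafellar sum rule is simply a more explicit justification of the ``additive separability'' step that the paper states without comment, so the two arguments coincide in substance.
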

\emph{Proof:} We write $E$ as $E(x) = \hat{E}(x) + w(x)$ and apply the additive separability of Bregman distances to find $D_E(x,y) = D_{\hat{E}}(x,y) + D_w(x,y)$, which is greater than or equal to $D_w(x,y)$, as $D_{\hat{E}}(x,y)$ is non-negative due to the convexity of $\hat{E}$.
For the usual euclidean loss, this property reduces to strong convexity:
\begin{example}\label{prop:strongly_convex_cond}
    If the loss function is given by a squared Euclidean loss, $l(x,y)=\frac{1}{2}||x-y||^2$ and the energy is $m$-strongly convex, then assumption \eqref{eq:blanket_assumption} is fulfilled for the energy $\frac{1}{m} E$.
\end{example}

The question remains whether the proposed surrogate problem \eqref{eq:surrogate_1} is efficiently solvable. We especially wanted to circumvent the differentiation of $x(\theta)$. However $D_E \left( x_i^*,x_i(\theta) \right)$ is much easier to solve, in comparison to the original bi-level problem, as we can see in both its primal and its dual formulation. First, from a primal viewpoint, we have
\begin{align*}
    &D_E \left( x_i^*,x_i(\theta) \right) \\
    =& E(x_i^*,y_i,\theta) - E(x_i(\theta),y_i,\theta) - \langle p_i,x_i^* - x_i(\theta)\rangle,
\end{align*}
for some subgradient $p_i \in \partial E(x_i(\theta))$ which we have not specified yet. But, as $0 \in \partial E(x_i(\theta))$ as $x_i(\theta)$ is by definition a solution to the lower-level problem, we may take $p=0$ and simplify to
\begin{align*}
E(x_i^*,y_i,\theta) - E(x_i(\theta),y_i,\theta).
\end{align*}
Now $x_i(\theta)$ is contained solely in $E$ and we can write
\importantBox{\textbf{Bregman Surrogate:}
\begin{align}\label{eq:sur_primal}
D_{E_\theta}^0 \left( x_i^*,x_i(\theta) \right) = \max_{x \in \R^n} E(x_i^*,y_i,\theta) - E(x,y_i,\theta).
\end{align}}
This surrogate function is already much simpler than the original bi-level problem. We can minimize \eqref{eq:sur_primal} either by alternating minimization in $\theta$ and maximization in $x$ or by jointly optimizing both variables. However, the problem is still set up as a saddle-point problem which is not ideal for optimization.
\begin{remark}
Interestingly, this discriminative formulation is not wholly unfamiliar. We can understand this as an appropriate generalization of generalized perceptron training \cite{lecun_loss_2005,lecun_tutorial_2006,tappen_learning_2007} as discussed as far back as \cite{rosenblatt_perceptron:_1958}. See the appendix for further details. In vein of this comparison, conditions 1 and 2 from e.g. \cite{lecun_loss_2005}, i.e. conditions on the existence of a margin between the optimal solution and other candidate solutions central to (S)SVM methods \cite{vapnik_statistical_1998, taskar_learning_2005, taskar_structured_2006} are reflected in Proposition \ref{prop:relatively_convex_cond} in the convex continuous setting. Due to continuity of the energy and loss function we cannot obey a fixed margin, yet we impose that the energy grows at least as fast as the loss function, when we move away from the optimal solution.
\end{remark}
We can resolve the saddle-point question by analyzing the surrogate \eqref{eq:surrogate_1} from a dual standpoint, as by Bregman duality \cite{benning_modern_2018-1}
\begin{equation}\label{eq:dual_form}
    D_{E_\theta}^0 \left( x_i^*,x_i(\theta) \right) = D_{E^*_\theta}^{x_i^*}(0, q_i)
\end{equation}
for $q_i \in \partial E(x_i^*,y,\theta)$.  Contrasting this formulation with our initial goal of penalizing the subgradient as in Eq.~\eqref{eq:gradient_penalty}, we see that the Bregman distance induced by $E^*$ is the natural 'distance' by which to penalize the subgradient in the sense that penalizing the subgradient at $x_i^*$ with this generalized distance recovers a majorizing surrogate.

We can further simplify the dual formulation by applying Fenchel's theorem:
\begin{equation}\label{eq:primaldual}
D_{E^*_\theta}^{x_i^*}(0, q_i) = E(x_i^*,y_i,\theta) + E^*(0,y_i,\theta).
\end{equation}
Computing $E^*(0)$ is exactly as difficult as minimizing $E$ (as $E^*(0) = \min_x E(x)$), so we need to rewrite this surrogate in a tractable manner. To do so, we assume that $E$ can be additively decomposed into two parts,
\begin{equation}
     E(x,y,\theta) = E_1(x,y,\theta) + E_2(x,y,\theta),
\end{equation}
where both $E_1$ and $E_2$ are convex in their first argument and their convex conjugates are simple to compute. 
%
Exploiting that $E^*(0) = \min_z E_1^*(-z) + E_2^*(z)$ yields
\begin{equation}\label{eq:sur_dual}
    D_{E^*_\theta}^{x_i^*}(0, q_i) = \min_{z \in \R^n} E(x_i^*,y_i,\theta) + E_1^*(-z,y,\theta) + E_2^*(z,y,\theta).
\end{equation}
In comparison to the primal formulation in Eq~\eqref{eq:sur_primal}, we have now reformulated the problem from a saddle point problem (minimizing in $\theta$ and maximizing in $x$) to a pure minimization problem, which is easier to handle. This is a generalization of the dual formulation discussed in the linear context of SSVMs for example in \cite{taskar_learning_2005,taskar_structured_2006}.

However for both variants we still need to handle an auxiliary variable. We can trade some of this computational effort for a weaker majorizer by making specific choices for $z$ in Eq.~\eqref{eq:sur_dual}. To illuminate these choices we introduce the function $W_E(p,x) = E^*(p) + E(x) - \langle p,x\rangle$ \cite{reich_existence_2011,butnariu_proximal-projection_2008}, which allows us to write 
\begin{equation}\label{eq:W_E}
    D_{E^*_\theta}^{x_i^*}(0, q_i) = \min_{z \in \R^n} W_{E_1,\theta}(-z,x_i^*) + W_{E_2,\theta}(z,x_i^*).
\end{equation}
Note that $W_E(p,x) = 0$ if $p \in \partial E(x)$. As such choosing either $-z \in \partial E_1(x_i^*)$ or $z \in \partial E_2(x_i^*)$ allows us to simplify the problem further. This is especially attractive if $E$ is differentiable, as then both surrogates can be computed without auxiliary variables. We will denote these as \emph{partial} surrogates, owing to the fact that we minimize only one term in \eqref{eq:W_E}
\importantBox{\textbf{Partial Surrogate:}
\begin{equation}\label{eq:partial_surrogate}
    \min_{z \in \partial E_2(x_i^*,y_i,\theta)} W_{E_1,\theta}(-z,x_i^*).
\end{equation}
}
Effectively, this reduces the requirements of \eqref{eq:sur_dual}, as only the convex conjugate of $E_1$ needs to be computed. By symmetry, the other partial surrogate follows analogously.

We can finally also return to the previously discussed gradient penalty \eqref{eq:gradient_penalty}. If our energy $E$ is $m(\theta,y)$-strongly convex, then its convex conjugate is strongly smooth and we can bound the dual formulation \eqref{eq:dual_form} via 
\importantBox{\textbf{Gradient Penalty}
\begin{equation}\label{eq:gradient_penalty_corrected}
    \frac{1}{m(\theta,y_i)} ||q_i||^2 \st q_i \in \partial E(x_i^*,y_i,\theta).
\end{equation}}
While this formulation allows us to minimize an upper bound on the bi-level problem without either auxiliary variables or knowledge about $E_1^*$ or $E_2^*$, it also is the crudest over-approximation among the considered surrogates as the following proposition illustrates.
\begin{figure}
    \centering 
    \includegraphics[width=0.5\textwidth]{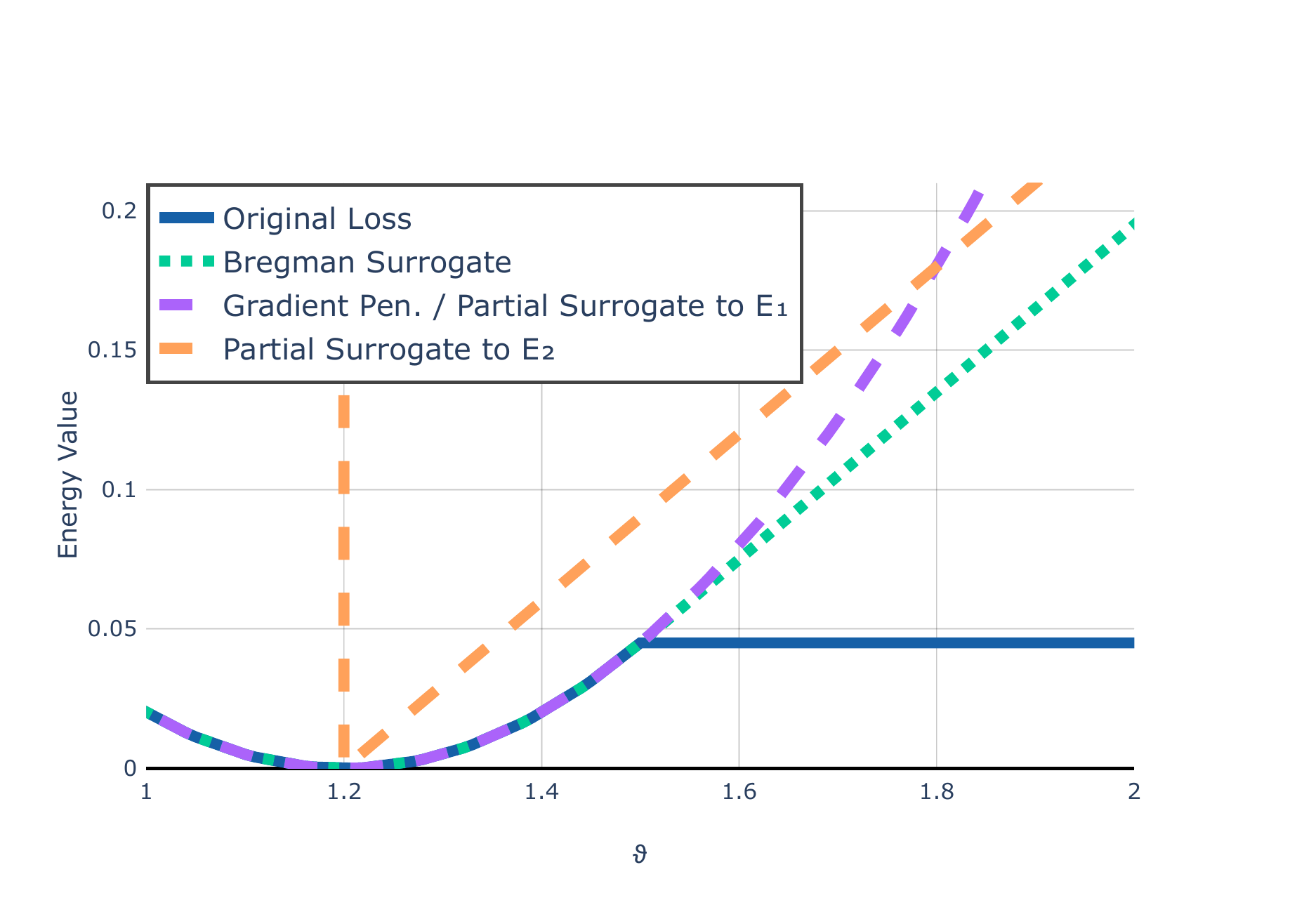}
    \caption{Visualization of surrogate functions for the bi-level problem given in Eq.~\eqref{eq:bilevel_1d}. The blue line marks the original bi-level problem, the green dots marks the Bregman distance surrogate discussed in Eq.~\eqref{eq:sur_primal}. The orange curve marks the partial surrogate obtained from ~\eqref{eq:W_E} by inserting $z = \nabla E_1(x^*)$, whereas the purple line marks the other partial surrogate \eqref{eq:partial_surrogate} which is equivalent to the gradient penalty \eqref{eq:gradient_penalty_corrected} here.
    \label{fig:1D_vis}}
\end{figure}
\begin{proposition}[Ordering of parametric majorizers]\label{prop:ordering}
    Assuming the condition $l(x,z) \leq D_{E_\theta}(x,z)$ from Eq.~\eqref{eq:blanket_assumption}, we find that the presented parametric majorizers can be ordered in the following way:
    \begin{align*}
         l(x_i^*,x(\theta)) &\leq D_{E_\theta}^0 \left( x_i^*,x_i(\theta) \right) = D_{E^*_\theta}^{x_i^*}(0, q_i) \\
         &\leq \min_{z \in \partial E_2(x_i^*)} W_{E_1}(-z,x_i^*) \\
         & \leq \frac{1}{m(\theta,y)} ||q_i||^2 \st q_i \in \partial E(x_i^*,y,\theta).
    \end{align*}
    The Bregman surrogate \eqref{eq:sur_primal} majorizes the original loss function and is in turn majorized by the partial surrogate \eqref{eq:partial_surrogate} which is majorized by the gradient penalty \eqref{eq:gradient_penalty_corrected} under the assumption of strong convexity.
\end{proposition}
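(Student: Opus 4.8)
\emph{Proof sketch (plan).} The statement is a chain of four links, read left to right, and I would verify each in turn. The first inequality and the equality are essentially bookkeeping with identities already recorded in the excerpt; the inequality $D_{E^*_\theta}^{x_i^*}(0,q_i)\le \min_{z\in\partial E_2(x_i^*)}W_{E_1}(-z,x_i^*)$ is a restriction-of-the-feasible-set argument; and the final passage to the gradient penalty is the only link with real content, requiring the strong-convexity hypothesis and a judicious choice of auxiliary variable.

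For the first link: since $x_i(\theta)$ minimizes $E(\cdot,y_i,\theta)$ we have $0\in\partial E(x_i(\theta),y_i,\theta)$, so $D_{E_\theta}^0(x_i^*,x_i(\theta))$ is a genuine Bregman distance, and applying the blanket assumption \eqref{eq:blanket_assumption} to the pair $(x_i^*,x_i(\theta))$ gives $l(x_i^*,x_i(\theta))\le D_{E_\theta}^0(x_i^*,x_i(\theta))$. The equality $D_{E_\theta}^0(x_i^*,x_i(\theta))=D_{E^*_\theta}^{x_i^*}(0,q_i)$ for $q_i\in\partial E(x_i^*,y_i,\theta)$ is exactly the Bregman-duality identity \eqref{eq:dual_form}. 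For the third link I would start from the representation \eqref{eq:W_E}, $D_{E^*_\theta}^{x_i^*}(0,q_i)=\min_{z\in\R^n}\bigl(W_{E_1}(-z,x_i^*)+W_{E_2}(z,x_i^*)\bigr)$: shrinking the feasible set of $z$ to $\partial E_2(x_i^*)$ can only raise the minimum, and on that set the Fenchel--Young equality forces $W_{E_2}(z,x_i^*)=0$, so the restricted minimum equals $\min_{z\in\partial E_2(x_i^*)}W_{E_1}(-z,x_i^*)$ and dominates the unrestricted one.

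The last link, $\min_{z\in\partial E_2(x_i^*)}W_{E_1}(-z,x_i^*)\le \tfrac{1}{m(\theta,y)}\|q_i\|^2$, is where the work lies. The plan: use the subdifferential sum rule to write $q_i=p_i+r_i$ with $p_i\in\partial E_1(x_i^*)$, $r_i\in\partial E_2(x_i^*)$; evaluate the minimum at the admissible choice $z=r_i$, giving $\min_{z\in\partial E_2(x_i^*)}W_{E_1}(-z,x_i^*)\le W_{E_1}(-r_i,x_i^*)$; rewrite $-r_i=p_i-q_i$ and, using $W_{E_1}(p_i,x_i^*)=0$ together with $x_i^*\in\partial E_1^*(p_i)$, identify $W_{E_1}(p_i-q_i,x_i^*)$ with the conjugate Bregman distance $D_{E_1^*}^{x_i^*}(p_i-q_i,p_i)$; finally apply strong smoothness of $E_1^*$ --- which follows from $m(\theta,y)$-strong convexity of $E$ once the strongly convex summand is taken to be $E_1$ --- to bound this by a constant multiple of $\|(p_i-q_i)-p_i\|^2=\|q_i\|^2$, absorbing the constant into $m(\theta,y)$. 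The closing sentence of the proposition is just a paraphrase of this chain.

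\emph{Main obstacle.} The last inequality is the hard part, for three reasons. The curvature has to live in the right term: if the decomposition $E=E_1+E_2$ puts all the strong convexity into $E_2$, then $E_1^*$ may fail to be finite near $p_i-q_i$ and the partial surrogate can be $+\infty$ while the gradient penalty stays finite, so the ordering should be read for decompositions in which $E_1$ carries the modulus of strong convexity. The sum rule $\partial(E_1+E_2)(x_i^*)=\partial E_1(x_i^*)+\partial E_2(x_i^*)$ needs a constraint qualification (e.g.\ one of $\dom E_1,\dom E_2$ having nonempty interior), which holds in the cases of interest but should be recorded. And the resulting constant is deliberately loose --- it passes through strong smoothness of $E_1^*$, and that crudeness is exactly the point the proposition is making about the gradient penalty.
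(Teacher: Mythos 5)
Your proposal matches the paper's own proof essentially step for step: the first inequality from the blanket assumption, the equality via the Bregman duality identity, the third inequality by picking $z\in\partial E_2(x_i^*)$ in the minimum of \eqref{eq:W_E} so that $W_{E_2}$ vanishes, and the final bound by writing $q_i=p_i+r_i$ via the subdifferential sum rule, identifying $W_{E_1}(-r_i,x_i^*)$ with the conjugate Bregman distance $D_{E_1^*}^{x_i^*}$, and invoking strong smoothness of $E_1^*$ after placing the strong-convexity modulus in $E_1$. Your cautionary remarks (curvature must sit in $E_1$, sum rule needs a constraint qualification) correspond exactly to the paper's ``without loss of generality'' and ``mild assumptions on the additivity of subgradients,'' so nothing is missing.
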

\begin{proof} See appendix. \end{proof}
As a clarifying example, we can simplify these majorizers in the differentiable setting:
\begin{example}[Differentiable Energy]
    Let $E$ be differentiable and $m(\theta,y)$-strongly convex, 
    then the majorizers in Prop. \ref{prop:ordering} are given by
    \begin{align*}
         l(x_i^*,x(\theta)) &\leq D_{E_\theta} \left( x_i^*,x_i(\theta) \right) = D_{E^*_\theta}(0, \nabla E(x_i^*,y_i,\theta)) \\
         &\leq W_{E_1}(-\nabla E_2(x_i^*),x_i^*) \\ 
         & \leq \frac{1}{m(\theta,y)} ||\nabla E(x_i^*,y_i,\theta)||^2.
    \end{align*}
\end{example}
\subsection{Intermission: One-Dimensional Example}
Let us illustrate our discussion with a toy example. We consider the non-smooth bi-level problem of learning the optimal sparsity parameter $\theta$ in the bi-level problem:
\begin{align}
\label{eq:bilevel_1d}
   & \min_{\theta \in \R} \frac{1}{2}|x^*-x(\theta)|^2,  \\
   \label{eq:bilevel_1d_lower}
    \textnormal{subject to }\qquad & x(\theta) = \argmin_x \frac{1}{2}|x-y|^2 + \theta |x|.
\end{align}
As the lower-level energy is 1-strongly convex and the upper level loss is quadratic $l(x,y)\leq D_{E_\theta}(x,y)$ holds. Detailed derivations of all three surrogate functions of this example can be found in the appendix. Figure \ref{fig:1D_vis} visualizes these surrogates, plotting their energy values relative to $\theta$. Due to the low dimensionality of the problem, all surrogate functions coincide with the original loss function at the optimal value of $\theta$. It is further interesting to note that the Bregman surrogate is exactly identical with the original loss function in the vicinity of the optimal value, due to the low dimensionality of the example. 

\subsection{Iterative Majorizers}
We used subsection \ref{sec:single-level} to construct a series of upper bounds to facilitate a trade-off between efficiency and exactness. However what happens if we are not satisfied with the exactness of the Bregman surrogate \eqref{eq:surrogate_1}? This setting can happen especially if $x^*$ and $x(\theta)$ are significantly incompatible and subsequently $l(x^*,x(\theta))$ is large, even for optimal $\theta$. For example if we try to optimize only a few hyper-parameters we might not at all expect $x(\theta)$ to be close to $x^*$. This discussion can again be linked to the notion of ’separability’ in SVM approaches \cite{vapnik_statistical_1998}: The quality of the majorizing strategy is directly related to the level of ’separability’ of the bi-level problem.

However, we can use the previously introduced majorizers iteratively. To do so we need to develop a majorizer that depends on a given estimate $\bar{x}$.
\begin{proposition}
    Under the standing assumption that $l(x,y) \leq D_{E_\theta}(x,y)$ \eqref{eq:blanket_assumption} and if the loss function is induced by a strictly convex function $w:\R^n \to \R$, i.e. $l(x,y) = D_w(y,x)$, we have the following inequality:
    \begin{equation}
        l(x,y) \leq l(x,z) + \langle \nabla_z l(x,z), y- z \rangle + D_E(z,y).
    \end{equation}
\end{proposition}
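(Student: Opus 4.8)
The plan is to collapse the claimed inequality onto the blanket assumption \eqref{eq:blanket_assumption} by means of the three-point identity for Bregman distances. Using the hypothesis $l(x,y) = D_w(y,x)$ with $w$ strictly convex and differentiable (so that $\nabla_z l$ in the statement is well defined, consistently with the standing assumption that $l$ be differentiable in its second argument), I would first record the explicit forms
\begin{equation*}
  l(x,z) = w(z) - w(x) - \langle \nabla w(x),\, z - x\rangle, \qquad \nabla_z l(x,z) = \nabla w(z) - \nabla w(x),
\end{equation*}
where the gradient is taken with respect to the first argument of $D_w$.

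Next I would substitute these into the right-hand side $R := l(x,z) + \langle \nabla_z l(x,z),\, y - z\rangle + D_E(z,y)$ and simplify. The two gradient contributions $-\langle \nabla w(x),\, z-x\rangle$ and $-\langle \nabla w(x),\, y-z\rangle$ combine into $-\langle \nabla w(x),\, y-x\rangle$; then, adding and subtracting $w(y)$, the first two terms of $R$ become exactly $D_w(y,x) - D_w(y,z)$. This is nothing but the three-point identity $D_w(y,x) = D_w(z,x) + \langle \nabla w(z) - \nabla w(x),\, y - z\rangle + D_w(y,z)$. Hence $R = l(x,y) + \big( D_E(z,y) - D_w(y,z)\big)$, so the asserted bound $l(x,y) \leq R$ is equivalent to $D_w(y,z) \leq D_E(z,y)$.

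Finally, I would invoke the blanket assumption \eqref{eq:blanket_assumption} at the pair of points $(z,y)$: it states $l(z,y) \leq D_{E_\theta}(z,y)$, and $l(z,y) = D_w(y,z)$ by hypothesis, which yields exactly $D_w(y,z) \leq D_E(z,y)$ and closes the argument. If $E$ is non-smooth, $D_E(z,y)$ is read with a subgradient $p \in \partial E(y)$, as in \eqref{eq:blanket_assumption}, and nothing changes. The only delicate point is the bookkeeping in the substitution step — keeping the linearization point of $D_w$ fixed at $x$ while the differentiation variable is $z$, so that all $\nabla w(x)$ terms cancel and a clean $D_w(y,z)$ is exposed; once this cancellation is observed, the result is immediate and needs no smoothness or extra convexity of $E$ beyond what is already assumed.
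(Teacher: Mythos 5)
Your proof is correct and follows essentially the same route as the paper's: expand $l$ via $D_w$, apply the Bregman three-point identity to rewrite $l(x,y)$ as $l(x,z)+\langle \nabla_z l(x,z), y-z\rangle + D_w(y,z)$, and then use the standing assumption \eqref{eq:blanket_assumption} at the pair $(z,y)$ to bound $D_w(y,z)\leq D_E(z,y)$. The only difference is that you re-derive the three-point identity by direct substitution rather than citing it, which the paper does in one line.
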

\begin{proof}
It holds that $l(x,y) = D_w(y,x)$ which is equivalent to $D_w(y,z) + D_w(z,x) - \langle \nabla w(x) -\nabla w(z),z-y\rangle$ by the Bregman 3-Point inequality \cite{chen_convergence_1993,teboulle_simplified_2018}. Using the standing assumption and that $\nabla w(x) -\nabla w(z) = \nabla_x D_w(x,z)$ we find the proposed inequality.
\end{proof}
Assume we are given an estimated solution $\bar{x}_i$, then we can use this estimate to rewrite our bound to
\begin{align}\label{eq:direct_iterative}
\begin{split}
    l(x_i^*,x_i(\theta)) \leq &l(x_i^*,\bar{x}_i) + \langle \nabla l(x_i^*, \bar{x}_i), x_i(\theta)- \bar{x}_i \rangle \\ &+D_E(\bar{x}_i,x_i(\theta)).
\end{split}
\end{align}
This is a linearized variant of the parametric majorization bound and as such a nonconvex composite majorizer in the sense of \cite{geiping_composite_2018}, as such a key property of majorization-minimization techniques remains in the parametrized setting, choosing $\bar{x}_i = x_i(\theta^k)$:
\begin{proposition}[Descent Lemma]\label{prop:descent}
The iterative procedure given by repeatedly minimizing the right-hand side of Eq. \eqref{eq:direct_iterative} in $\theta$ and setting $\bar{x_i} = x_i(\theta^k)$ 
is guaranteed to be stable, i.e. not to increase the bi-level loss:
\begin{equation}
    \sum_{i=1}^N l \left( x_i^*,x_i(\theta^{k+1}) \right) \leq \sum_{i=1}^N l \left( x_i^*,x_i(\theta^{k}) \right) \tag{23}
\end{equation}
\end{proposition}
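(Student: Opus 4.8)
The plan is to recognize the proposed update as a textbook majorization--minimization (MM) iteration and to close the argument with the three inequalities that characterize such schemes. Fix an iterate $\theta^k$, set $\bar{x}_i = x_i(\theta^k)$ as prescribed, and introduce the surrogate
\begin{equation*}
  F_k(\theta) \;:=\; \sum_{i=1}^N \Big( l(x_i^*,\bar{x}_i) + \langle \nabla l(x_i^*,\bar{x}_i),\, x_i(\theta) - \bar{x}_i\rangle + D_E(\bar{x}_i,x_i(\theta)) \Big),
\end{equation*}
so that the scheme reads $\theta^{k+1} \in \argmin_\theta F_k(\theta)$. I would prove the statement by establishing (i) a global majorization $\sum_i l(x_i^*,x_i(\theta)) \le F_k(\theta)$ valid for every $\theta \in \R^s$, and (ii) tightness at the current iterate, $F_k(\theta^k) = \sum_i l(x_i^*,x_i(\theta^k))$. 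The conclusion then follows from the chain $\sum_i l(x_i^*,x_i(\theta^{k+1})) \overset{(i)}{\le} F_k(\theta^{k+1}) \le F_k(\theta^k) \overset{(ii)}{=} \sum_i l(x_i^*,x_i(\theta^k))$, where the middle inequality is merely optimality of $\theta^{k+1}$ for $F_k$.

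Step (i) is essentially already available: apply the preceding proposition, i.e. the bound \eqref{eq:direct_iterative}, termwise with $x = x_i^*$, $y = x_i(\theta)$, $z = \bar{x}_i$ --- which is legitimate for any $\theta$ since that inequality holds for arbitrary triples --- and sum over $i$. This inherits the standing hypotheses $l(x,z) \le D_{E_\theta}(x,z)$ from \eqref{eq:blanket_assumption} and that $l$ is a Bregman distance of a strictly convex $w$.

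Step (ii) is the point that needs a little care, since the Bregman term $D_E(\bar{x}_i,x_i(\theta))$ involves an a priori unspecified subgradient. I would simply note that for any choice of subgradient one has $D_E(\bar{x}_i,\bar{x}_i) = 0$, so evaluating $F_k$ at $\theta = \theta^k$ makes $x_i(\theta^k) = \bar{x}_i$, kills the inner-product term, and leaves $F_k(\theta^k) = \sum_i l(x_i^*,\bar{x}_i) = \sum_i l(x_i^*,x_i(\theta^k))$, as required. It is worth recording here that the canonical choice $p = 0 \in \partial E(x_i(\theta),y_i,\theta)$ --- valid precisely because $x_i(\theta)$ minimizes $E(\cdot,y_i,\theta)$ --- turns $D_E^0(\bar{x}_i,x_i(\theta))$ into $E(\bar{x}_i,y_i,\theta) - E(x_i(\theta),y_i,\theta)$, the tractable form used throughout Section \ref{sec:single-level}, but this is a bonus rather than something the descent lemma needs.

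I expect the genuinely delicate points to be minor. The subgradient bookkeeping in Step (ii) is handled by the observation above; and the well-posedness of the inner minimization is the only other concern --- if $\argmin_\theta F_k(\theta)$ need not be attained, the statement survives verbatim after replacing ``minimizing'' by ``choosing any $\theta^{k+1}$ with $F_k(\theta^{k+1}) \le F_k(\theta^k)$'', which is always possible (e.g. $\theta^{k+1} = \theta^k$). As is standard for such descent lemmas, the result only certifies monotone non-increase of the bi-level loss, not convergence of $\theta^k$ to a stationary point, and I would phrase the conclusion accordingly.
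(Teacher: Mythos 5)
Your proof is correct and follows essentially the same route as the paper's: the standing bound \eqref{eq:direct_iterative} gives global majorization, optimality of $\theta^{k+1}$ for the surrogate gives the middle inequality, and vanishing of the inner-product and Bregman terms at $\theta=\theta^k$ gives tightness, exactly the three steps used in the paper's appendix proof. Your added remarks on subgradient choice and on replacing exact minimization by any $\theta^{k+1}$ with $F_k(\theta^{k+1})\le F_k(\theta^k)$ are harmless refinements, not a different argument.
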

\begin{proof}
    See appendix.
\end{proof}
However this algorithm cannot be applied directly, as we would still need to differentiate $x_i(\theta)$ appearing in the linearized part. Nevertheless, we can use both Fenchel's inequality $\langle p,x \rangle \leq E(x) + E^*(p)$ and the previously established $D_{E_\theta}(x,x(\theta)) = E(x,y,\theta) - E(x(\theta),y,\theta)$ to find an over-approximation to the iterative majorizer of Prop. \ref{prop:descent}:
\begin{align*}
     &l(x_i^*,x_i(\theta)) \\
\leq ~ &l(x_i^*,\bar{x}_i) - \langle \nabla l(x_i^*, \bar{x}_i), \bar{x}_i \rangle \\
     &+ E^* \left( \nabla l(x_i^*, \bar{x}_i),y_i,\theta \right) + E(x_i(\theta), y_i, \theta) \\
     &+ E(\bar{x}_i,y_i,\theta) - E(x_i(\theta),y_i,\theta) \\
    = ~ & l(x_i^*,\bar{x}_i) - \langle \nabla l(x_i^*, \bar{x}_i), \bar{x}_i \rangle \\
   & + E(\bar{x}_i,y_i,\theta)+ E^* \left( \nabla l(x_i^*, \bar{x}_i),y_i,\theta \right) 
\end{align*}
This estimate reveals that we can approximate the iterative majorizer much like the previously discussed surrogates:
\importantBox{ \textbf{Iterative Surrogate}
\begin{equation}\label{eq:iterative_surrogate}
    E(\bar{x}_i,y,\theta)+ E^* \left( \nabla l(x_i^*, \bar{x}_i),y_i,\theta \right) + C,
\end{equation}
}
as the constant $C = l(x_i^*,\bar{x}_i) - \langle \nabla l(x_i^*, \bar{x}_i), \bar{x}_i \rangle$ does not depend on $\theta$. We essentially return to Eq.~\eqref{eq:primaldual} and only the input to $E$ and $E^*$ changes with respect to $\bar{x}_i$. This strategy recovers the previous majorizer as a special case:
\begin{corollary}\label{cor:gt_init}
    If we linearize around $\bar{x_i} = x_i^*$, then we recover the Bregman surrogate of \eqref{eq:surrogate_1}.
\end{corollary}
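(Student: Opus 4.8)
The plan is a direct substitution into the iterative surrogate, followed by identification with an identity already derived in the text. I would start from \eqref{eq:iterative_surrogate}, namely $E(\bar{x}_i,y_i,\theta) + E^*\!\left(\nabla l(x_i^*,\bar{x}_i),y_i,\theta\right) + C$ with $C = l(x_i^*,\bar{x}_i) - \langle \nabla l(x_i^*,\bar{x}_i),\bar{x}_i\rangle$, and set $\bar{x}_i = x_i^*$.

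The one point that deserves a line of justification is the value of the loss gradient on the diagonal. By the standing assumptions on $l$ — nonnegativity, $l(x,x)=0$, and differentiability in the second argument — the map $y \mapsto l(x_i^*,y)$ attains its global minimum (value $0$) at $y=x_i^*$, and since this is an unconstrained minimization over $\R^n$, first-order optimality gives $\nabla l(x_i^*,x_i^*)=0$. Together with $l(x_i^*,x_i^*)=0$ this makes the constant collapse, $C = 0 - \langle 0, x_i^*\rangle = 0$, and turns the conjugate term into $E^*(0,y_i,\theta)$. Hence at $\bar{x}_i = x_i^*$ the iterative surrogate reduces to $E(x_i^*,y_i,\theta) + E^*(0,y_i,\theta)$.

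It then remains to recognize this expression: it is exactly the right-hand side of \eqref{eq:primaldual}, which equals $D_{E^*_\theta}^{x_i^*}(0,q_i)$ by Fenchel's theorem and in turn equals $D_{E_\theta}^{0}\!\left(x_i^*,x_i(\theta)\right)$ by the Bregman duality \eqref{eq:dual_form}. Summing over $i$ reproduces \eqref{eq:surrogate_1} term by term, which is the claim. I do not anticipate any genuine obstacle here; everything after the first-order optimality observation is bookkeeping that reuses the identities \eqref{eq:primaldual}, \eqref{eq:dual_form}, and \eqref{eq:surrogate_1} verbatim, so the corollary is essentially a consistency check showing that the iterative scheme contains the single-shot Bregman surrogate as its initialization at the ground truth.
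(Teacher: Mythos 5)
Your proposal is correct and follows essentially the same route as the paper's proof: substitute $\bar{x}_i = x_i^*$, observe $l(x_i^*,x_i^*)=0$ and $\nabla l(x_i^*,x_i^*)=0$ so that $C=0$ and the conjugate term becomes $E^*(0,y_i,\theta)$, recovering \eqref{eq:primaldual} and hence the Bregman surrogate \eqref{eq:surrogate_1}. Your explicit first-order-optimality justification of $\nabla l(x_i^*,x_i^*)=0$ just spells out what the paper summarizes as "the properties of the differentiable loss function."
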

\begin{proof}  
    If $\bar{x_i} = x_i^*$, then $l(x_i^*,\bar{x}_i) = 0$ and $\nabla l(x_i^*,\bar{x}_i) = 0$ by the properties of the differentiable loss function. As such the constant term $C$ is zero and $E^* \left( \nabla l(x_i^*, \bar{x}_i),y_i,\theta \right) = E^*(0,y_i,\theta)$ so that we recover \eqref{eq:primaldual} which is equivalent to the Bregman surrogate \eqref{eq:surrogate_1}.
\end{proof}
We can use this surrogate to form an efficient approximation to a classical majorization-minimization strategy as in \cite{sun_majorization-minimization_2017,mairal_incremental_2015,mairal_optimization_2013-1,hunter_tutorial_2004}. Notably the 'tightness' of the majorization is violated by the over-approximation, i.e. inserting $\theta^k$ into the majorizer does not recover $l(x_i^*,x_i(\theta^k))$. We iterate
\begin{align}\label{eq:iterative_scheme}
\begin{split}
        \theta^{k+1} = \argmin_\theta \sum_{i=1}^N &E^* \left( \nabla l(x_i^*,x_i(\theta^k)),y_i,\theta\right)\\
    + &E \left(x(\theta^k),y_i,\theta \right)
\end{split}
\end{align}
As the application of this iterative scheme reduces to a simple change from Eq~\eqref{eq:primaldual} to Eq.~\eqref{eq:iterative_surrogate}, we can easily apply it in practice to further increase the fidelity of the surrogate by solving a sequence of fast surrogate optimizations. We initialize the scheme with $\bar{x}_i = x_i^*$ as suggested from Corollary \ref{cor:gt_init} and either stop iterating or reduce the step size of the surrogate solver if the higher-level objective is increased after an iteration.

\section{Examples}
This section will feature several experiments\footnote{An implementation of these experiments can be found at {\footnotesize \url{https://github.com/JonasGeiping/ParametricMajorization}}.} in which we will illustrate the application of the investigated methods. We will show two concepts of new applications that are possible in parametrized variational settings, \ref{sec:ct} and \ref{sec:segmentation}. 
We then show an application to image denoising in \ref{sec:analysis_operators}.

\subsection{Computed Tomography}\label{sec:ct}
Making only specific parts of a variational model learnable is especially interesting for computed tomography (CT). An image $x$ is to be reconstructed from data $y = Ax + n$ that is formed by applying the radon transform to the image $x$ and adding noise $n$. While first fully-learning based solutions to this problem exist (e.g. \cite{jin_deep_2017,kang_deep_2017}), suitable networks are difficult to find not only due to the ill-posedness of the underlying problem, but also due to the well-justified concerns about fully learning-based approaches in medical imaging \cite{antun_instabilities_2019}. To benefit from the explicit control of the data fidelity of the reconstruction,  we consider to introduce a learnable linear correction term into an otherwise classical reconstruction technique via
\begin{align*}
    x_i(\theta) = \arg \min_{ x} \frac{1}{2}\|Ax-y_i\|_2^2 + \beta R(x) 
    + \langle x, \mathcal{N}(\theta, y_i) \rangle,
\end{align*}
for a suitable network $\mathcal{N}$ (we chose 8 blocks of $3\times3$ convolutions with 32 filters, ReLU activations, and batch-normalization, and a final $5\times 5$ convolution), and $R$ denoting the Huber loss of the discrete gradient of $x$. 

As both convex conjugates are difficult to evaluate in closed-form, we choose the gradient penalty
\eqref{eq:gradient_penalty_corrected}, which is a parametric majorizer for euclidean loss if $A$ has full rank (and practically even works beyond this setting, as it majorizes $||A(x-y)||^2$ even for rank-deficient $A$). According to \eqref{eq:gradient_penalty_corrected} we consider
\begin{align*}\label{eq:ctPrimalSurrogate}
    \min_{\theta \in \R^s} \sum_{i=1}^n \|A^*Ax_i^* - A^*y_i + \beta \nabla R(x_i^*) + \mathcal{N}(\theta, y_i)\|_2^2,
\end{align*}
train on simulated noisy data and test our model on the widely-used Shepp-Logan phantom. Figure \ref{fig:ctExample} illustrates the 
the resulting reconstruction, as well as the best reconstruction using the variational approach without the additional linear correction term after a grid-search for the optimal $\beta$. As we can see, the surrogate trained the linear correction term well enough to improve the PSNR of the reconstruction by almost 2dB. Moreover, the influence of the linear correction term can still be visualized and the data fidelity can easily be controlled via a suitable weighting. We visualize the correction map in the appendix.

\begin{figure}
    \centering
        \begin{tabular}{cc}
    \includegraphics[height = 0.22\textwidth]{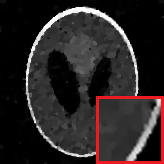} &
    \includegraphics[height = 0.22\textwidth]{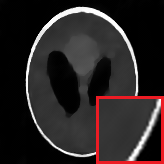}\\
\small  Huber-TV, PSNR 23.9 &\small  Learned cor., PSNR 25.8\\
    \end{tabular}
    \caption{Learning a linear correction term for a Huber-regularized CT reconstruction problem using the gradient penalty \eqref{eq:gradient_penalty}.  }
    \label{fig:ctExample}
\end{figure}


\subsection{Variational Segmentation}\label{sec:segmentation}
For a very different (and non-smooth) example, consider the task of learning a variational segmentation model \cite{chan_active_2001,chambolle_convex_2012,cremers_convex_2011,nieuwenhuis_survey_2013}. We are interested in learning a model whose minimizer coincides with a (semantic) segmentation of the input data. The lower-level problem is given by
\begin{equation}
    x(\theta) = \argmin_{x} -\langle \mathcal{N}(\theta,y),x\rangle + ||Dx||_1 + h(x),
\end{equation}
where $h(x) = \sum_{j=1}^n x_i\log(x_i) + I_\Delta(x)$ is the entropy function on the unit simplex $\Delta$ \cite{beck_mirror_2003}. $\mathcal{N}(\theta,y)$ is some parametrized function that computes the potential of the segmentation model, this can be a deep neural network, as we only require convexity in $x$ and not in $\theta$. $D$ is a finite-differences operator, so that the overall total variation (TV) term $||Dx||_1$ measures the perimeter of a segmentation $x$ if $x \in \lbrace 0,1\rbrace^n$. The entropy function crucially not only leads to a strictly convex model but also represents the structure of a usual learned segmentation method. Without the perimeter term, a solution to the lower-level problem would be given by
\begin{equation}\label{eq:segmentation_simple}
    x(\theta) = \nabla h^*( \mathcal{N}(\theta,y)).
\end{equation}
Due to \cite[P.148]{rockafellar_convex_1970}, $\nabla h^*$ is exactly the $\operatorname{softmax}$ function, so that Eq.~\eqref{eq:segmentation_simple} is equivalent to applying a parametrized function $\mathcal{N}$ and then applying the $\operatorname{softmax}$ function to arrive at the final output, a usual image recognition pipeline during training. As a higher-level loss, we choose $\operatorname{log}$ loss
\begin{equation}\label{eq:cross_entropy}
    \sum_{i=1}^N -\langle x_i^*,\log(x_i(\theta)) \rangle = \sum_{i=1}^N D_h(x_i^*,x_i(\theta))
\end{equation}
so that the bi-level problem without the perimeter term is equivalent to minimizing the cross-entropy loss of $\mathcal{N}(\theta,y)$. 
With the inclusion of the perimeter term, however, we cannot find a closed-form solution for $x(\theta)$ need to consider bi-level optimization.
\begin{figure}
    \centering
    \includegraphics[width=0.49\textwidth]{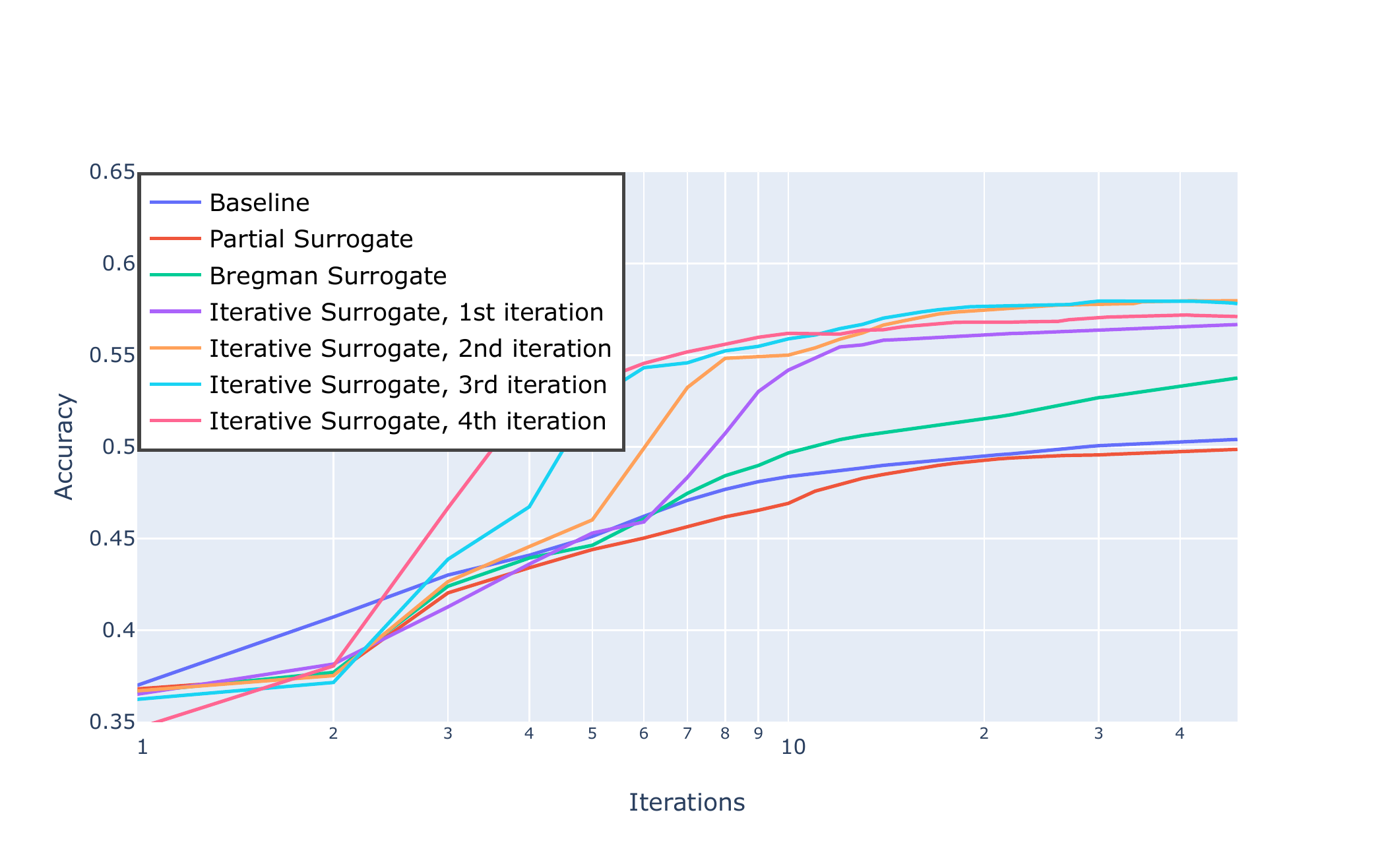}
    \caption{Training accuracy for the variational segmentation model discussed in Section \ref{sec:segmentation} for a linear model $\mathcal{N}(\theta,y_i)$. Directly training a cross-entropy loss without the perimeter term, training the Bregman surrogate Eq~\eqref{eq:bregman_segmentation}, the Partial surrogate Eq~\eqref{eq:partial_surrogate} and four iterations of the iterative scheme 
    are compared. We find that the end-to-end training with the perimeter term increases the segmentation accuracy. We also see that a small number of iterations in the iterative scheme 
    is sufficient for a practical CV task.}
    \label{fig:segmentation_tv}
\end{figure}
But, as the log-loss \eqref{eq:cross_entropy} can be written as a Bregman distance relative to $h$, our primary assumption $l(x,z) \leq D_{E_\theta}(x,z)$ \eqref{eq:blanket_assumption} is fulfilled and we can consider the Bregman surrogate problem in the dual setting of Eq.~\eqref{eq:sur_dual}:
\begin{equation}
    \min_\theta \sum_{i=1}^N \min_{z_i} W_h(\mathcal{N}(\theta,y_i)-z_i, x_i^*) + W_{TV}(z_i,x_i^*),
\end{equation}
which we can rewrite to
\begin{align}\label{eq:bregman_segmentation}
\begin{split}
        \min_\theta \sum_{i=1}^N \min_{||p_i||\leq 1} &h^*\left(\mathcal{N}(\theta,y_i)-D^T p_i\right) \\
        &-\langle \mathcal{N}(\theta,y_i),x_i^*\rangle + ||Dx_i^*||_1.
\end{split}
\end{align}
We note that this is essentially a cross-entropy loss with an additional additive term $p_i$, that is able to balance out incoherent output of $\mathcal{N}(\theta,y_i)$ that would lead to erroneous segmentations with a higher perimeter. Furthermore, the training process is still convex w.r.t to $\mathcal{N}(\theta,y_i)$, in contrast to unrolling schemes. The iterative model \eqref{eq:iterative_scheme} has a very similar structure, including the gradient of the loss into \eqref{eq:bregman_segmentation}. 


To validate this setup, we choose $\mathcal{N}$ to be given by a simple convolutional linear model. We draw a small subset of the \texttt{cityscapes} dataset and compare the cross entropy model of Eq~\eqref{eq:segmentation_simple} with the total variation bi-level model of Eq. \eqref{eq:bregman_segmentation} and its partial and iterative applications. Figure \ref{fig:segmentation_tv} visualizes the training accuracy over training iterations. We find that the proposed approach is able to improve the segmentation accuracy of the linear model significantly. We refer to the appendix for further details.
%
%
\begin{table}
    \centering
    \begin{tabular}{c|c|c|c|c}
        Model  & PSNR & T & PSNR(Iter.)  & TT \\
        \hline
        Total Variation  &  27.41 & - & - & -\\      
        3  3x3 Filters & 26.66  & 00:34 & 27.66 & 02:21 \\      
        48  7x7 Filters &  27.41 & 02:45 & 28.03 & 03:11\\   
        96 9x9 Filters & 27.46 & 01:43 & 28.03 & 02:22\\
    \end{tabular}
    \caption{Training time (T) in minutes for each surrogate computation and PSNR on the test dataset for various gray-scale filters for the energy model in Eq.~\eqref{eq:analysis_model} with and without the iterative process of Eq~\eqref{eq:iterative_surrogate} and total time (TT) for the iterative process are compared to total variation with optimal regularization parameter. Note that training time varies mostly due to differing iteration counts. The results of the convex model of \cite{chen_insights_2014} are reproduced.}
    \label{tab:analyis}
\end{table}

\subsection{Analysis Operator Models}\label{sec:analysis_operators}
Finally, we illustrate the behaviour of our approach on a practically relevant model, learning a set of optimal convolutional filters for denoising \cite{rudin_nonlinear_1992,chen_insights_2014}. We consider the parametric energy model
\begin{equation}\label{eq:analysis_energy_model}
    x(\theta) = \argmin_x \frac{1}{2}||x-y_i||^2 + ||D(\theta) x||_1,
\end{equation}
with $D(\theta)$ denoting the convolution operator to be learned, which is prototypical for many other image processing tasks. We consider square loss $l(x,y) = \frac{1}{2}||x-y||^2$ as a higher loss function and apply our approach. A Bregman surrogate for this model has the form
\begin{equation}\label{eq:analysis_model}
    \min_\theta \sum_{i=1}^N \min_{||p_i||\leq 1} ||D(\theta) x^*_i||_1 + \frac{1}{2}||D^T(\theta) p_i - y_i||^2.
\end{equation}
Model \eqref{eq:analysis_energy_model} was previously considered in \cite{chen_insights_2014,chen_learning_2012}, where it was solved via implicit differentiation. We repeat the setup of \cite{chen_insights_2014} and train a denoising model on the \texttt{BSDS} dataset \cite{martin_database_2001}. Refer to the appendix for the experimental setup and optimization strategy.

Table \ref{tab:analyis} shows both PSNR values achieved when training $D(\theta)$ as convolutional filters as well as training time. In comparison to \cite{chen_insights_2014}, we find strikingly, that we can train a convex model with similar performance to the convex model in \cite{chen_insights_2014}, while being an order of magnitude faster than the original approach. Furthermore in \cite{chen_insights_2014}, the necessary training time jumps from 24 hours for 48 7x7 filters to 20 days for 96 9x9 filters - in our experiment the training time is almost unaffected by the number of parameters, and in this example actually smaller as the larger model converges faster. Also this analysis validates that the iterative process is crucial to reaching competitive PSNR values.

\section{Conclusions}
We investigated approximate training strategies for data-driven energy minimization methods by introducing \emph{parametric majorizers}. We systematically studied such strategies in the framework of convex analysis, and proposed the Bregman distance induced by the lower level energy as well as over-approximations thereof as suitable majorizers. We discussed an iterative scheme that shows promise for applications in computer vision, particularly due to its scalability as shown by its application to image denoising. 

\appendix
\section{Appendix}
\subsection{Convex Analysis in Section 3}
\subsubsection{Details for Derivation of Eqs. (11), (12)}
Eq. (11) above describes the application of Bregman duality:
\begin{equation}
D^0_{E_\theta}(x_i^*,x_i(\theta)) = D_{E_\theta^*}^{x_i^*}(0, q_i) \quad q_i \in \partial E(x_i^*,y_i,\theta) \tag{11},
\end{equation}
which is a common application of the following identity \cite{burger_bregman_2016,benning_modern_2018-1}:
\begin{lemma}[Bregman Identity]\label{app:lemma}
	Consider a convex lsc. function $E:\R^n \to \R$ with a subgradient $p \in \partial E(y)$. Then, the following identity holds:
	\begin{equation*}
	D_E^p(x,y) = D_{E^*}^x(p,q), \quad q \in \partial E(x) 
	\end{equation*}
\end{lemma}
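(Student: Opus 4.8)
The plan is to establish the identity by expanding both Bregman distances from their definitions and applying the Fenchel--Young equality twice. The only structural fact I will need is that for a proper convex lsc function $E$ one has $E(u) + E^*(v) \geq \langle u, v \rangle$ with equality exactly when $v \in \partial E(u)$, and that---since $E = E^{**}$ by lower semicontinuity and convexity---this condition is equivalent to $u \in \partial E^*(v)$. Everything else is algebraic bookkeeping.

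Starting from $D_E^p(x,y) = E(x) - E(y) - \langle p, x - y \rangle$, I would first use $p \in \partial E(y)$ to replace $E(y)$ by $\langle y, p \rangle - E^*(p)$; after regrouping the linear terms this collapses to $D_E^p(x,y) = E(x) + E^*(p) - \langle p, x \rangle$, which, incidentally, is exactly the quantity $W_E(p,x)$ appearing in Eq.~\eqref{eq:W_E}. Then I would use $q \in \partial E(x)$ to replace $E(x)$ by $\langle x, q \rangle - E^*(q)$, which gives $D_E^p(x,y) = E^*(p) - E^*(q) - \langle x, p - q \rangle$, precisely $D_{E^*}^x(p,q)$ by definition. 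The right-hand side is legitimate because $q \in \partial E(x)$ forces $x \in \partial E^*(q)$, so $x$ is a valid base point for a Bregman distance relative to $E^*$.

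I do not expect a genuine obstacle here; the argument is essentially a two-line symmetric application of Fenchel--Young. The one point worth flagging is that the left-hand side does not reference the choice of $q \in \partial E(x)$, whereas the right-hand side appears to; the derivation shows the $q$-dependent contributions cancel, so the identity holds for every such $q$. A minor technical caveat is keeping all quantities well-defined in the extended reals, which is ensured once $E$ is assumed proper, convex and lsc so that $E^*$ is likewise proper.
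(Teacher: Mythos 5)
Your proof is correct and follows essentially the same route as the paper's: expand the Bregman distance and apply the Fenchel--Young equality twice, once for $p \in \partial E(y)$ and once for $q \in \partial E(x)$, arriving at $E^*(p) - E^*(q) - \langle x, p-q \rangle = D_{E^*}^x(p,q)$. Your added remarks --- that the intermediate expression is exactly $W_E(p,x)$, that $q \in \partial E(x)$ is equivalent to $x \in \partial E^*(q)$ via $E = E^{**}$, and that the identity is independent of the choice of $q$ --- are accurate refinements of the same argument, not a different one.
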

\begin{proof}
	This property follows from equality (Fenchel's identity) in the Fenchel-Young inequality $E(x) + E^*(p) = \langle p,x \rangle \iff p \in \partial E(x)$. To see this we write
	\begin{equation*}
	D^p_E(x,y) = E(x) - \langle p,x\rangle - E(y)  + \langle p,y\rangle 
	\end{equation*}
	and apply Fenchel's identity for $p,y$ to find 
	\begin{equation*}
	D^p_E(x,y) = E(x) - \langle p,x\rangle + E^*(p)
	\end{equation*}
	We then introduce any $q \in \partial E(x)$ by writing $\langle p,x\rangle = \langle p-q+q,x\rangle$ and apply Fenchel's identity again:
	\begin{equation*}
	D^p_E(x,y) = E^*(p) - E^*(q) - \langle x, p -q \rangle = D_{E^*}^x(p,q)
	\end{equation*}
\end{proof}
The step from Eq. (11) to Eq.(12) is simply the first step of this derivation:
\begin{align}
&D_{E_\theta}(x_i^*,x_i(\theta)) &&= E(x_i^*,y_i,\theta) - \langle 0,x_i^*\rangle + E^*(0,y_i,\theta) \notag \\
=& D_{E_\theta^*}^{x_i^*}(0, q_i)&&= E(x_i^*,y_i,\theta) + E^*(0,y_i,\theta) \tag{12}
\end{align}
as $p_i = 0$ is a subgradient of $E$ at $x_i(\theta)$ and $q_i$ at $x_i^*$.

\subsubsection{Details for Derivation of Eq. (14) to (15)}
A crucial subtlety of Lemma \ref{app:lemma} is that this identity holds for any $q \in \partial E(x)$ and the choice of subgradients is irrelevant, the Bregman distance is equal for all choices. This motivates the introduction of the $W$-function $W_E(p,x) = E^*(p) + E(x) - \langle p,x \rangle$. This function is convex in either $p$ or $x$ and always non-negative. It can be understood as measuring the deviation of $p$ from subgradients of $x$ as a direct implementation of the Fenchel-Young inequality. As such it is 0 exactly if $p \in \partial E(x)$. Previous usage of this function can be found for example in \cite{butnariu_proximal-projection_2008,reich_existence_2011}. For Legendre functions \cite{bauschke_legendre_1997}, i.e. functions where both $E$ and $E^*$ are (essentially) smooth, the connection to Bregman distances is immediate:
\begin{equation*}
W_E(p,x) = D_E^p(x, \nabla E^*(p)),
\end{equation*}
for non-smooth functions this is also a part of the proof of Lemma \ref{app:lemma}, replacing $\nabla E^*(p)$ by $y \in \partial E^*(p)$. As such, we can write Eq. (12) as
\begin{equation}
D_{E^*}^{x_i^*}(0,q_i) = W_{E_\theta}(0, x_i^*) \tag{12}.
\end{equation}
The introduction of this function then allows us to show that
\begin{equation}
W_{E}(0, x_i^*) = \min_z W_{E_1,\theta}(-z,x_i^*) + W_{E_2, \theta}(z,x_i^*) \tag{15}
\end{equation}
under the assumption in Eq.(13), that $E$ can be written as $E_1 + E_2$, with both functions convex. We recognize this as the clear extension of the infimal convolution property $E^*(0) = \min_z E_1^*(-z) + E_2^*(z)$ (which itself can be understood as Fenchel's duality theorem applied to $E_1$, $E_2$) to these functions, in the smooth setting this could be written via
\begin{align*}
D_{E^*}^{x_i^*}(0,\nabla E(x_i^*)) = \min_z ~ ~ & D_{E_1^*}(-z,\nabla E_1(x_i^*)) \\
+& D_{E_2^*}(z,\nabla E_2^*(x_i^*)).
\end{align*}
We arrive at Eq. (15) from Eq. (14) by rewriting $E$ in Eq.(14):
\begin{align}
\begin{split}
&\min_z E_1(x_i^*,y_i,\theta) + E_2(x_i^*,y_i,\theta) \\
&+ E_1^*(-z,y_i,\theta) + E_2^*(z,y_i,\theta)    
\end{split} \tag{14}\\
\begin{split}
=  &\min_z  E_1(x_i^*,y_i,\theta) + E_2(x_i^*,y_i,\theta) + \langle z, x_i^* \rangle \\
&+ E_1^*(-z,y_i,\theta) + E_2^*(z,y_i,\theta) - \langle z, x_i^* \rangle
\end{split}\notag \\
= &\min_z W_{E_1,\theta}(-z,x_i^*) + W_{E_2,\theta}(z,x_i^*) \tag{15}.
\end{align}

\subsubsection{Proof of Proposition 2}
\begin{figure*}
	\centering
	\includegraphics[width=0.45\textwidth]{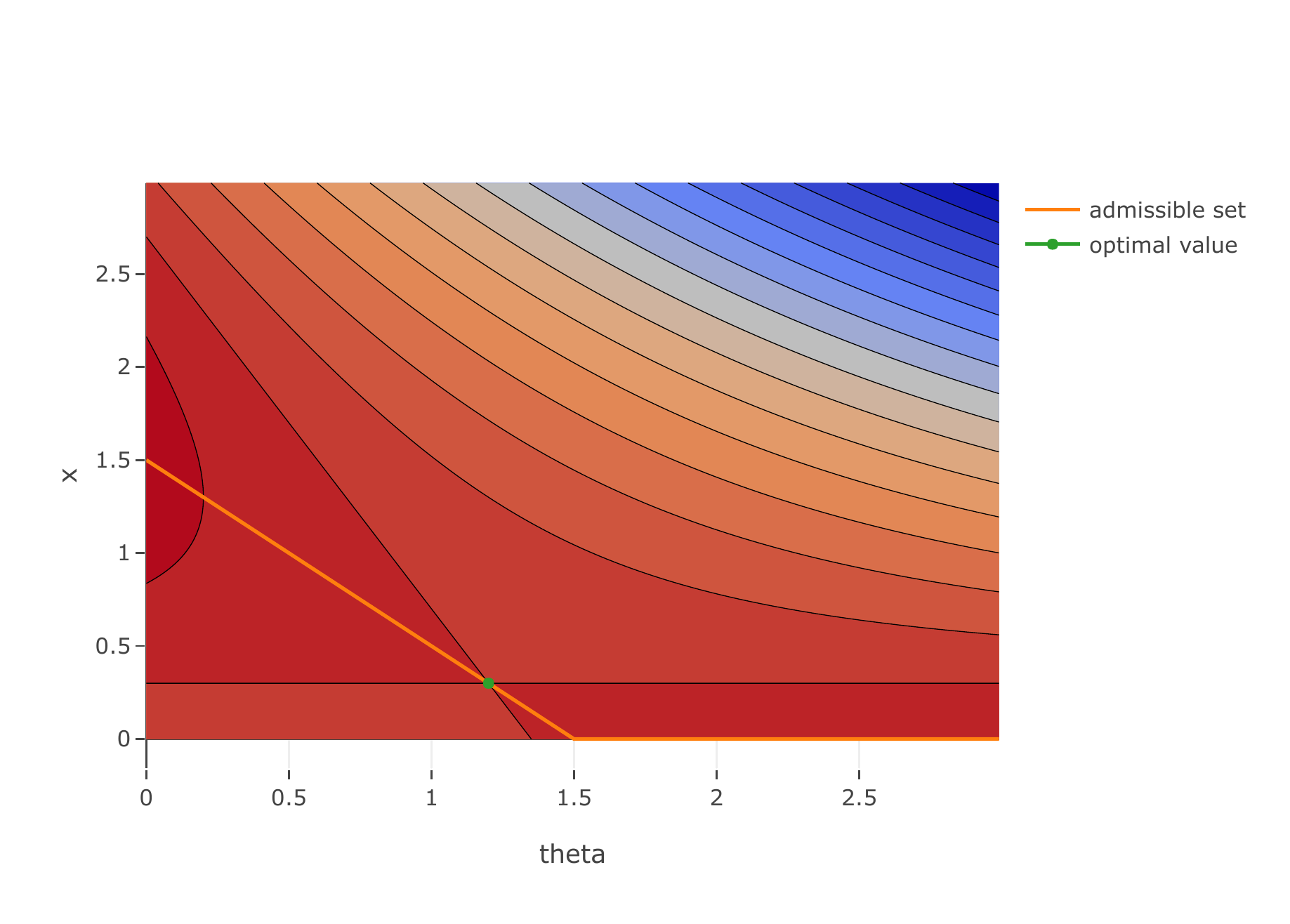}
	\includegraphics[width=0.45\textwidth]{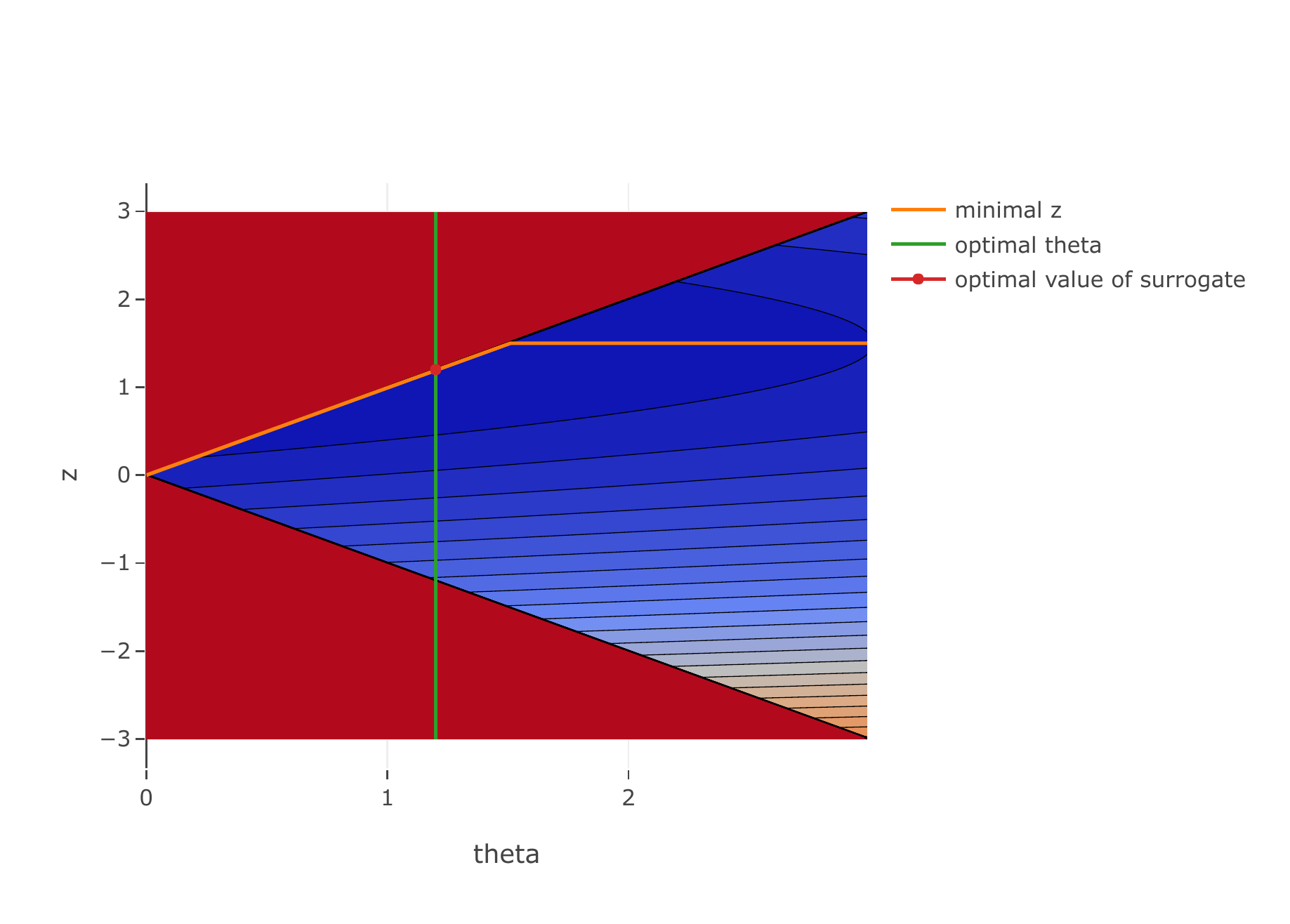}
	\caption{Visualization of the Bregman surrogate problem in primal formulation (left) and dual formulation (right). The problem in visualized over all $(x,\theta)$, respectively $(z,\theta)$. The admissible $x(\theta)$ are marked in orange in the left contour plot and the optimal $z(\theta)$ one the right. The optimal value in $\theta$ is marked in green in both plots.}
	\label{fig:app1}
\end{figure*}
\setcounter{proposition}{1}
\begin{proposition}[Ordering of parametric majorizers]
	Assuming the condition $l(x,z) \leq D_{E_\theta}(x,z)$ from Eq.~(8), we find that the presented parametric majorizers can be ordered in the following way:
	\begin{align*}
	l(x_i^*,x(\theta)) &\leq D_{E_\theta}^0 \left( x_i^*,x_i(\theta) \right) = D_{E^*_\theta}^{x_i^*}(0, q_i) \\
	&\leq \min_{z \in \partial E_2(x_i^*)} W_{E_1}(-z,x_i^*) \\
	& \leq \frac{1}{m(\theta,y)} ||q_i||^2 \st q_i \in \partial E(x_i^*,y,\theta).
	\end{align*}
	The Bregman surrogate (10) majorizes the original loss function and is in turn majorized by the partial surrogate (16) which is majorized by the gradient penalty (17) under the assumption of $m(\theta,y)$ - strong convexity of $E_1$.
\end{proposition}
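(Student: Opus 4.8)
The plan is to verify the chain one link at a time, in the displayed order; apart from the very last step, everything reduces to identities already assembled above. The first inequality $l(x_i^*,x(\theta)) \le D_{E_\theta}^0(x_i^*,x_i(\theta))$ is just the blanket assumption~\eqref{eq:blanket_assumption} specialized to $x=x_i^*$, $z=x_i(\theta)$, together with the fact that $0\in\partial E(x_i(\theta))$ at the lower-level minimizer, so the Bregman distance may legitimately be taken with the subgradient $p=0$. The middle equality $D_{E_\theta}^0(x_i^*,x_i(\theta)) = D_{E^*_\theta}^{x_i^*}(0,q_i)$ is a direct instance of the Bregman Identity (Lemma~\ref{app:lemma}) with $y=x_i(\theta)$, $p=0$, $x=x_i^*$ and any $q_i\in\partial E(x_i^*)$; as recorded there, the right-hand side does not depend on which $q_i$ is chosen.

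For the second inequality I would rewrite $D_{E^*_\theta}^{x_i^*}(0,q_i) = W_{E_\theta}(0,x_i^*)$ and then, via the infimal-convolution identity behind \eqref{eq:sur_dual}--\eqref{eq:W_E} that exploits the splitting $E=E_1+E_2$, express it as $\min_{z\in\R^n}\bigl(W_{E_1}(-z,x_i^*)+W_{E_2}(z,x_i^*)\bigr)$. Shrinking the feasible set of this minimization from $\R^n$ to $z\in\partial E_2(x_i^*)$ can only increase its value, and on that set $W_{E_2}(z,x_i^*)=0$ by the Fenchel--Young equality, leaving exactly $\min_{z\in\partial E_2(x_i^*)}W_{E_1}(-z,x_i^*)$, which is the partial surrogate~\eqref{eq:partial_surrogate}.

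The remaining inequality --- the passage to the gradient penalty~\eqref{eq:gradient_penalty_corrected} --- is where the actual argument sits. Fixing any $q_i\in\partial E(x_i^*)$, I would use the subdifferential sum rule $\partial E(x_i^*) = \partial E_1(x_i^*)+\partial E_2(x_i^*)$ to write $q_i = q_i^1 + q_i^2$ with $q_i^1\in\partial E_1(x_i^*)$ and $q_i^2\in\partial E_2(x_i^*)$, and then insert the feasible choice $z=q_i^2$ into the partial surrogate, which bounds it above by $W_{E_1}(-q_i^2,x_i^*)$. Since $q_i^1\in\partial E_1(x_i^*)$ (equivalently $x_i^*\in\partial E_1^*(q_i^1)$), this $W$-value equals the Bregman distance $D_{E_1^*}^{x_i^*}(-q_i^2,q_i^1)$ on the conjugate, by the same $W$-to-conjugate-Bregman identity used when deriving~\eqref{eq:W_E}. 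Finally, $m(\theta,y)$-strong convexity of $E_1$ makes $E_1^*$ strongly smooth, so this Bregman distance is controlled by a multiple of $\|{-q_i^2}-q_i^1\|^2 = \|q_i^1+q_i^2\|^2 = \|q_i\|^2$, which after tracking the smoothness constant yields $\tfrac{1}{m(\theta,y)}\|q_i\|^2$ as claimed.

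I expect this last link to be the main obstacle, for two reasons. First, it silently invokes a sum rule for $\partial(E_1+E_2)$: the inclusion $\supseteq$ is automatic, but the direction actually used, $\partial E(x_i^*)\subseteq\partial E_1(x_i^*)+\partial E_2(x_i^*)$, needs a constraint qualification (e.g.\ the relative interiors of $\dom{E_1}$ and $\dom{E_2}$ intersect) and should be stated explicitly so the splitting of $q_i$ is justified. Second, one must be careful that the quadratic bound uses strong \emph{convexity of $E_1$ alone}, not of the full $E$, because it is the strong \emph{smoothness of the conjugate $E_1^*$} that converts $W_{E_1}(-q_i^2,x_i^*)$ into a squared norm of $q_i$; this is precisely the hypothesis in the appendix version of the proposition. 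As a consistency check, in the differentiable regime the splitting becomes $\nabla E(x_i^*)=\nabla E_1(x_i^*)+\nabla E_2(x_i^*)$ and every step above reduces to the gradient form displayed in the Differentiable Energy example.
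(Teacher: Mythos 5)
Your proposal is correct and takes essentially the same route as the paper's proof: first inequality from the blanket assumption with the subgradient $p=0$, the middle equality from the Bregman identity, the partial surrogate by restricting the minimization in $z$ to $\partial E_2(x_i^*)$ where $W_{E_2}$ vanishes, and the final bound from strong smoothness of $E_1^*$. The only difference is cosmetic: the paper constructs $q_i = r+z$ with $r\in\partial E_1(x_i^*)$, $z\in\partial E_2(x_i^*)$ under ``mild assumptions on the additivity of subgradients,'' whereas you split a given $q_i\in\partial E(x_i^*)$ via the sum rule with an explicit constraint qualification (and the paper in fact obtains the slightly sharper constant $\tfrac{1}{2m(\theta,y)}$).
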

\begin{proof} The first inequality follows directly by the assumption $l(x,z) \leq D_{E_\theta}(x,z)$. The second inequality is the application of Bregman Duality discussed in Lemma \ref{app:lemma}. From Eq.(15) we now see that $D_{E_\theta}^{x_i^*}(0,q_i)$, $q_i \in \partial E(x_i^*,y_i,\theta)$ can be written as a minimum over $z$. Clearly choosing a non-optimal $z$ yields an upper bound to this minimal value. Without loss of generality, we choose $z \in \partial E_2(x_i^*)$ so that $W_{E_2,\theta}(z,x_i^*)$ is equal to zero. 
	
	Now we assume that $E$ is $m(\theta,y)$-strongly convex. We subsume this strong convexity term in $E_1$ again without loss of generality so that $E_1$ is strongly convex. By convex duality \cite{bauschke_convex_2011}, this implies that $E_1^*$ is $m(\theta,y)$ strongly smooth, i.e. $D^x_{E_1^*}(p,q) \leq \frac{1}{2 m(\theta,y)}||p-q||^2$. Following Eq.(12), we write
	\begin{alignat*}{2}
	&W_{E_1^*}(-z,x_i^*) = D_{E_1^*}^{x_i^*}(-z,r) \quad &z \in \partial E_2(x_i^*,y_i,\theta),\\ 
	&  &r \in \partial E_1(x_i^*,y_i,\theta) \\
	&\leq \frac{1}{2m(\theta,y)}||-z - r||^2  &\\
	&= \frac{1}{2m(\theta,y)}||q_i||^2 \quad &q_i \in \partial E(x_i^*,y_i,\theta),
	\end{alignat*}
	under mild assumptions on the additivity of subgradients of $E_1$ and $E_2$.
\end{proof}

\subsubsection{Derivation of the surrogate functions for the example in subsection 3.3}
Section 3.3 discusses the non-smooth bi-level problem given in Eqs. (18) and (19):
\begin{align}
& \min_{\theta \in \R} \frac{1}{2}|x^*-x(\theta)|^2, \tag{18}  \\
\textnormal{subject to }\qquad & x(\theta) = \argmin_x \frac{1}{2}|x-y|^2 + \theta |x|. \tag{19}
\end{align}
for both $x^*,y \in \R$. In this setting, the 'primal' formulation of the Bregman surrogate is given by
\begin{equation}
\min_\theta \max_x \frac{1}{2}|x^*-y|^2 - \frac{1}{2}|x-y|^2 + \theta\left(|x^*|-|x| \right) \tag{10 ex.}
\end{equation}
whereas the 'dual' formulation is given by 
\begin{equation}
\min_\theta \min_{|z| \leq \theta} \frac{1}{2}|x^*-y|^2 + \theta|x^*| + \frac{1}{2}|z-y|^2 \tag{12 ex.}.
\end{equation}
Note that this problem is convex in $z,\theta$ as the epigraph constraint $|z|\leq \theta$ is convex.
Both (equivalent!) variants are visualized in Figure \ref{fig:app1}. We see that the saddle-point of the primal formulation and the minimizer of the dual formulation correctly coincide with the optimal $\theta$.

Moving forward, we set $E_1(x,y) = \frac{1}{2}|x-y|^2$ and $E_2(x,\theta) = \theta|x|$ to compute the two partial surrogates. Firstly $W_{E_1,\theta}(-z,x^*), z \in \partial E_2(x^*)$ leads to
\begin{equation}
\min_\theta \frac{1}{2}|x^* -y + q|^2, \quad q \in \partial |x^*|, \tag{16 ex.1}
\end{equation}
where we take $q = \operatorname{sign}(x^*)$ as $x^* \neq 0$ in our example. As $E_1$ is a quadratic function, this is also equivalent to the gradient penalty in Eq. (17).
The second partial surrogate, $W_{E_2,\theta}(z,x^*), z \in \partial E_1(x^*)$ can be written as
\begin{align}
&\min_\theta \theta |x^*| + I_{|\cdot|\leq \theta}(x^*-y) - \langle x^*,x^* -y \rangle \tag{16 ex.2} \\
=&\min_{|x^*-y|\leq \theta} \theta |x^*| + C \notag.
\end{align}
Figure \ref{fig:app1} here and Figure 1 in the main paper both arise from the data point $x^* = 0.3, y = 1.5$.

To give some more details on the fact that the Bregman surrogate is exactly identical with the original loss function in the vicinity of the optimal value, note that this is caused by the special structure of the Bregman distance of the absolute value, $D_{|\cdot|}(x,y)$ as $D_{E_\theta}(x,y)$ decomposes into $\frac{1}{2}|x-y|^2 + \theta D_{|\cdot|}(x,y)$. This function is equal to the higher-level loss function as soon as the signs of $x^*$ and $x(\theta)$ coincide and as such the majorizer is exact, even if it is much easier to compute.

\subsubsection{Proof of Proposition 4}
Section 3.4 describes an iterative procedure for repeated application of the majorization strategies discussed in section 3.2. This scheme was based on the result of Proposition 3:
\begin{equation}
l(x,y) \leq l(x,z) + \langle \nabla_z l(x,z), y- z \rangle + D_E(z,y) \tag{20},
\end{equation}
inserting $x=x_i^*, y=x_i(\theta), z = x_i(\theta^k)$ leads to
\begin{align}
\begin{split}
l(x_i^*,x_i(\theta)) \leq l(x_i^*,x_i(\theta^k)) +D_{E_\theta}(x_i(\theta^k),x_i(\theta))\\
+ \langle \nabla l(x_i^*,x_i(\theta^k)), x_i(\theta)- x_i(\theta^k) \rangle.
\end{split}\tag{20b}
\end{align}

\setcounter{proposition}{3}
Eq.(20), respectively (20b), lead to a monotone descent of the higher-level loss, as shown in Proposition 4:
\begin{proposition}[Descent Lemma]
	The iterative procedure given by
	\begin{align*}
	\theta^{k+1} = \argmin_\theta &\sum_{i=1}^N l(x_i^*,x_i(\theta^k)) \\
	&+ \langle \nabla l(x_i^*,x_i(\theta^k)), x_i(\theta) - x_i(\theta^k) \rangle \\
	&+ D^0_{E_\theta}(x_i(\theta^k),x_i(\theta))
	\end{align*}
	is guaranteed to be stable, i.e. not to increase the bi-level loss:
	\begin{equation}
	\sum_{i=1}^N l \left( x_i^*,x_i(\theta^{k+1}) \right) \leq \sum_{i=1}^N l \left( x_i^*,x_i(\theta^{k}) \right) \tag{23}
	\end{equation}
\end{proposition}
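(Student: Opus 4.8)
The plan is to recognize the update in Proposition~4 as a textbook majorization--minimization step and to chain three elementary facts. Abbreviate the per-iteration surrogate by
\begin{equation*}
M_k(\theta) := \sum_{i=1}^N \Big( l(x_i^*,x_i(\theta^k)) + \langle \nabla l(x_i^*,x_i(\theta^k)),\, x_i(\theta)-x_i(\theta^k)\rangle + D^0_{E_\theta}\big(x_i(\theta^k),x_i(\theta)\big)\Big),
\end{equation*}
so that by definition $\theta^{k+1} \in \argmin_\theta M_k(\theta)$. The three ingredients I would establish are: (i) \emph{majorization}, $\sum_i l(x_i^*,x_i(\theta)) \le M_k(\theta)$ for every $\theta \in \R^s$; (ii) \emph{tightness at the current iterate}, $M_k(\theta^k) = \sum_i l(x_i^*,x_i(\theta^k))$; and (iii) \emph{non-increase}, $M_k(\theta^{k+1}) \le M_k(\theta^k)$.

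For (i) I would simply invoke Eq.~(20b), which is the bound of Proposition~3 with the substitution $x=x_i^*$, $y=x_i(\theta)$, $z=x_i(\theta^k)$, summed over $i$; this is where the standing hypotheses enter, namely that $l$ is the Bregman distance of a strictly convex $w$ and that $l \le D_{E_\theta}$ as in Eq.~(8). For (ii) I would evaluate $M_k$ at $\theta=\theta^k$: the inner-product terms vanish because $x_i(\theta^k)-x_i(\theta^k)=0$, and for the Bregman term I would use the simplification $D^0_{E_\theta}(x,z) = E(x,y_i,\theta) - E(z,y_i,\theta)$ --- legitimate because $0\in\partial E(x_i(\theta),y_i,\theta)$ whenever $x_i(\theta)$ is the lower-level minimizer, the same identity used to derive Eq.~(10) --- so at $\theta=\theta^k$ it becomes $E(x_i(\theta^k),y_i,\theta^k) - E(x_i(\theta^k),y_i,\theta^k) = 0$. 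Fact (iii) is immediate from $\theta^{k+1}$ being a minimizer of $M_k$ (at worst $\theta^{k+1}=\theta^k$ is feasible). Chaining the three,
\begin{equation*}
\sum_{i=1}^N l\big(x_i^*,x_i(\theta^{k+1})\big) \overset{(i)}{\le} M_k(\theta^{k+1}) \overset{(iii)}{\le} M_k(\theta^k) \overset{(ii)}{=} \sum_{i=1}^N l\big(x_i^*,x_i(\theta^{k})\big),
\end{equation*}
which is exactly Eq.~(23).

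I do not expect a genuine obstacle, since this is the standard MM descent inequality, but two points deserve care. First, the majorization in (i) must hold \emph{for all} $\theta$ simultaneously, which is precisely why Proposition~3 (hence assumption~(8) and the Bregman form of $l$) is needed rather than a pointwise estimate; it is worth stressing that this \emph{exact} surrogate $M_k$ is tight at $\theta^k$, in contrast to the over-approximated practical surrogate of Eq.~(21) whose tightness is lost, so Proposition~4's monotonicity guarantee pertains to the idealized scheme driven by Eq.~(20b). Second, for the statement to be clean one should either assume a minimizer of $M_k$ exists or, more robustly, weaken "$\theta^{k+1}=\argmin_\theta M_k(\theta)$" to "any $\theta^{k+1}$ with $M_k(\theta^{k+1}) \le M_k(\theta^k)$" (e.g. a single descent step on $M_k$); the displayed chain is unaffected.
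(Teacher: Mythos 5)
Your proposal is correct and follows essentially the same route as the paper's proof: the standard majorization--minimization chain combining the minimizer property of $\theta^{k+1}$, tightness of the surrogate at $\theta^k$ (vanishing inner product and zero Bregman term), and the majorization from Proposition~3 / Eq.~(20b). Your additional remarks on the tightness being lost in the over-approximated practical scheme and on existence of the minimizer match the paper's own remark following the proof and do not change the argument.
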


\begin{proof}[Proof of Proposition 4]
	$\theta^{k+1}$ is a minimizer of the iterative scheme. Therefore, evaluating the iteration at $\theta^{k+1}$ leads to a lower value than evaluating at $\theta^k$:
	\begin{align*}
	\sum_{i=1}^N &l(x_i^*,x_i(\theta^k)) 
	+ \langle \nabla l(x_i^*,x_i(\theta^k)), x_i(\theta^{k+1}) - x_i(\theta^k) \rangle \\
	+ &D^0_{E_{\theta^{k+1}}}(x_i(\theta^k),x_i(\theta^{k+1})) \\
	\leq     \sum_{i=1}^N &l(x_i^*,x_i(\theta^k)) 
	+ \langle \nabla l(x_i^*,x_i(\theta^k)), x_i(\theta^k) - x_i(\theta^k) \rangle \\
	+ &D^0_{E_{\theta^k}}(x_i(\theta^k),x_i(\theta^k)) \\
	= \sum_{i=1}^N &l(x_i^*,x_i(\theta^k)) 
	\end{align*}
	Now the left-hand-side is also equivalent to Eq. (20b) evaluated at $\theta^{k+1}$. Applying the inequality in (20b) for all $i=1,\dots,N$ we find
	\begin{align*}
	\sum_{i=1}^N l(x_i^*,x_i(\theta^{k+1})) \leq \sum_{i=1}^N l(x_i^*,x_i(\theta^k)). 
	\end{align*}
\end{proof}

\begin{remark}
	The iterative scheme given in Eq.(22), i.e.
	\begin{align}
	\begin{split}
	\theta^{k+1} = \argmin_\theta \sum_{i=1}^N& E^* \left( \nabla l(x_i^*,x_i(\theta^k)),y_i,\theta\right) \\
	+& E \left(x(\theta^k),y_i,\theta \right).       
	\end{split}\tag{22}
	\end{align}
	is an over-approximation of the iterative scheme discussed in Proposition 4. As such we expect the results of Proposition 4 to hold only approximately as stated in the main paper. 
\end{remark}
\begin{figure*}
	\centering
	\begin{tabular}{cccc}
		\includegraphics[trim={0.5cm 0.1cm 0.2cm 0.85cm},clip,height = 0.17\textwidth]{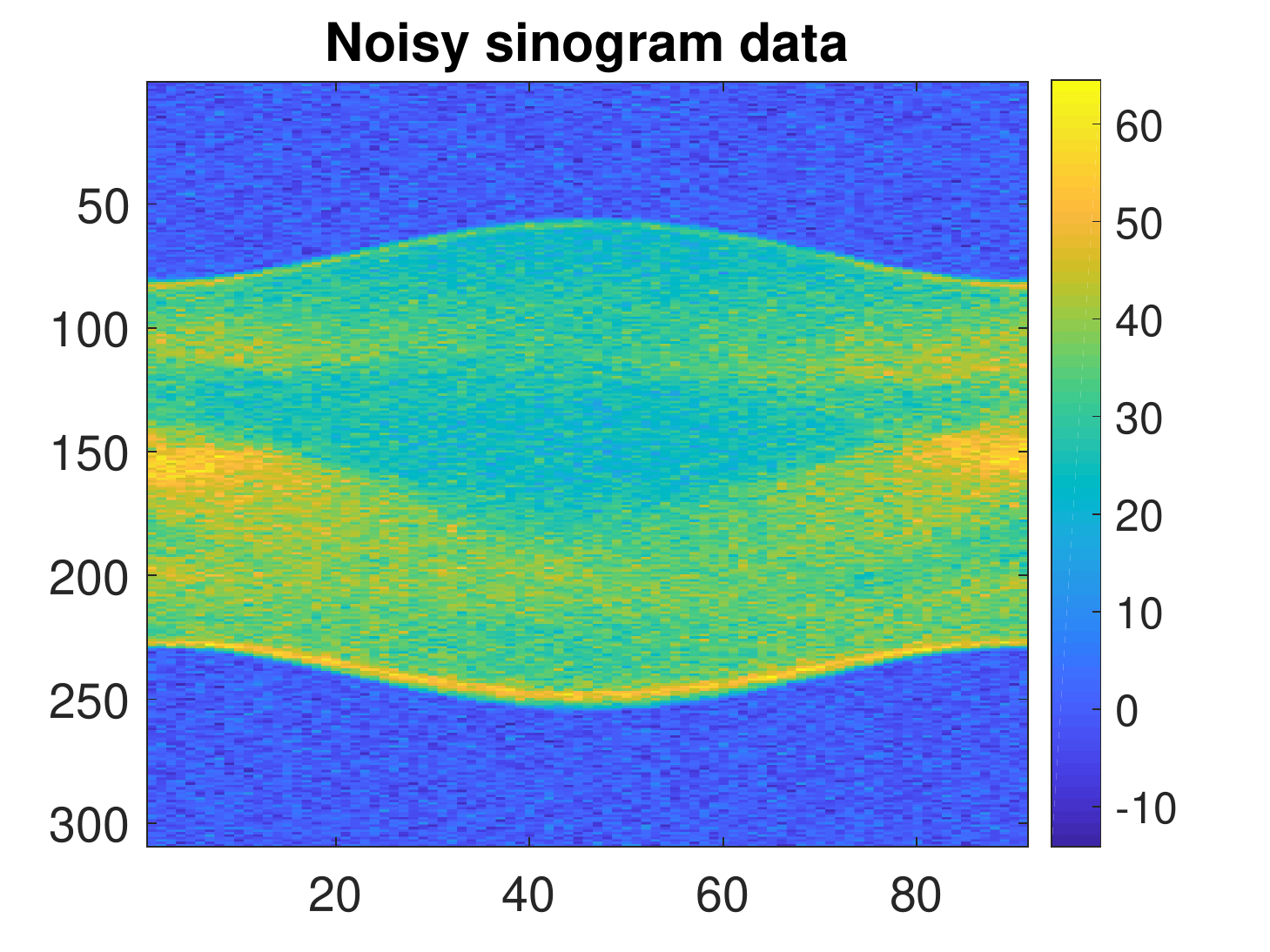} &
		\includegraphics[trim={0.5cm 0.1cm 0.2cm 0.8cm},clip,height = 0.17\textwidth]{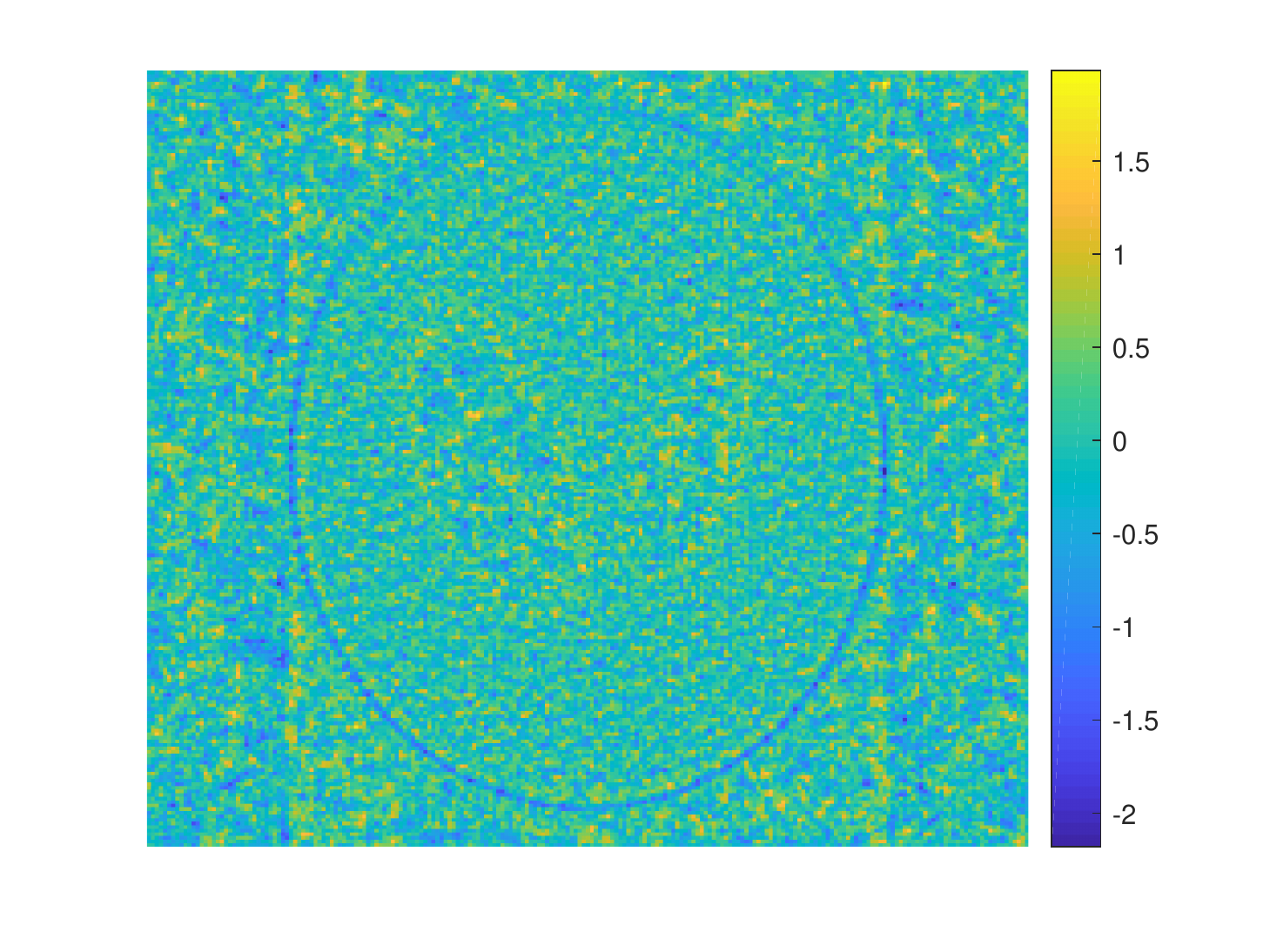} &
		\includegraphics[height = 0.17\textwidth]{zoomedOriginal.png} &
		\includegraphics[height = 0.17\textwidth]{zoomedxLearned.png}\\
		\small Noisy sinogram $y$ & \small correction term $ \mathcal{N}(\theta,y_i)$ &\small  Huber-TV, PSNR 23.9 &\small  Learned correction, PSNR 25.8\\
	\end{tabular}
	\caption{Illustrate our results for learning a linear correction term for a Huber-reguralized CT reconstruction problem. In reference to Figure 2 in the main paper we also visualize input data and the learned linear correction map. The predicted linear correction term can be visualized and inspected, and its influence can easily be quantified or explicitly scaled via a parameter.}
	\label{fig:ctExample2}
\end{figure*}    
\subsection{Experimental Setup}
This section will add additional details to the experiments presented in the paper\footnote{Refer also to the implementations hosted at \url{https://github.com/JonasGeiping/ParametricMajorization}}. 

\subsubsection{CT - Additional Details}
The implementation of the CT example in section 4.1 is straightforward. We generate pairs $(y_i^*,x_i^*)$ of noisy sinograms and ground truth images and optimize 
\begin{align*}\label{eq:ctPrimalSurrogate}
\min_{\theta \in \R^p} \sum_{i=1}^n \|A^*Ax_i^* - A^*y_i + \beta \nabla R(x_i^*) + \mathcal{N}(\theta, y_i)\|_2^2.
\end{align*}
We test our model on
the widely-used Shepp-Logan phantom, comparing the learned model with a pure Huber-TV solution, for which we found the optimal parameter $\beta$ by grid search. This setup was implemented in Matlab. To visualize the linear correction term, we repeat an extended version of Figure 2 in Figure \ref{fig:ctExample2}.

\subsubsection{Segmentation - Additional Details}
The segmentation experiment shown in Figure 3 of the main paper shows the results of training the variational model in Eq.(25), which corresponds to an augmented cross-entropy term, as discussed in section 4.2. 

The partial surrogate implemented in Figure 3 is a direct application of Eq.(16) to the segmentation setting, giving
\begin{equation*}
\min_\theta \sum_{i=1}^N \min_{p_i \in \partial ||Dx_i^*||} D_h \left( x_i^*,\nabla h^* \left( \mathcal{N}(\theta,y_i)-D^T p_i\right) \right),
\end{equation*}
where the computation of the auxiliary variable $p_i$ is simplified. Note further that the gradient penalty cannot be applied in this setting, as the segmentation energy $E$ is not strongly convex. Similarly, the iterative approach can be computed to be
\begin{align*}
\begin{split}
\min_\theta \sum_{i=1}^N \min_{||p_i||\leq 1} &h^*\left(\frac{x_i^*}{x_i(\theta^k)} + \mathcal{N}(\theta,y_i) -D^T p_i\right) \\
&-\left\langle \mathcal{N}(\theta,y_i),x_i(\theta^k)\right\rangle 
\end{split}
\end{align*}
which is still convex in $\mathcal{N}(\theta,y)$, but the input arguments now take previous solutions into account. 

To emphasize the convexity of the setup, we choose $\mathcal{N}(\theta,y_i)$ as a linear convolutional network of $3x3x3$ filters for each target class. We accordingly optimize the resulting convex minimization problems  by an optimal convex optimization method, namely FISTA \cite{beck_fast_2009}. To solve the inference problem in Eq. (25) we apply usual strategies and optimize via a primal-dual algorithm \cite{chambolle_first-order_2011} - to increase the speed we adapt a recent variant \cite{chambolle_ergodic_2016} and consider the Bregman-Proximal operator in the primal sub-problem for which we use the entropy function $h$ described in the paper, paralleling \cite{beck_mirror_2003,ochs_techniques_2016}.

We draw four images and their corresponding segmentations from the \texttt{cityscapes} data set \cite{cordts_cityscapes_2016} and implement the proposed procedures in PyTorch \cite{paszke_automatic_2017}. For Figure 3 we drew the first four images, which we resized to 128x256 pixels. To visualize the improvement over the iterations, we initialize the subsequent iterations of the iterative scheme again with the initial value of $\theta$, so that the training accuracy curves in Figure 3 are comparable. This is of course not strictly necessary and $\theta$ could be initialized with the current estimate in every iteration. We also point out that we visualize the actual training accuracy in Figure 3, meaning the percentage of successfully segmented pixels after \textit{hard argmax} of the results of the algorithms.
\subsubsection{Analysis Operators - Additional Details}
For this experiment we considered the task of learning an 'analysis operator' $D(\theta)$, i.e. a set of convolutional filters $\theta^k$ so that $D(\theta) = \sum_{k=1}^K \theta_k * x $ for a set of $K$ filters. Due to anisotropy, we can write the resulting minimization problem as
\begin{equation*}
x(\theta) = \argmin_x \frac{1}{2}||x-y||^2 + \sum_{k=1}^K ||\theta_k * x||_1.
\end{equation*}
We repeat the experimental setup of \cite{chen_insights_2014} and train this model on image pairs $x^*,y$ of noise-free and noisy image patches, to learn filters that result in a convex denoising model \cite{chen_bi-level_2014-1,chen_insights_2014}. To do so we draw a batch of 200 $64x64$ image patches from the training set of the Berkeley Segmentation data set \cite{martin_database_2001}, convert the images to gray-scale  and add Gaussian noise. To compare with \cite{chen_insights_2014} and \cite{zhang_beyond_2017} we do not clip the noisy images and use Matlab's \texttt{rgb2gray} routine to generate this data. Further, as in \cite{chen_insights_2014}, we do not optimize directly for the convolutional filters, but instead decompose each filter into a DCT-II basis, where we learn the weight of each basis function, excluding the constant basis function \cite{huang_statistics_1999}. Before training we initialize these weights by orthogonal initialization \cite{saxe_exact_2013} with a factor of $0.01$, respectively $0.001$ for the larger 9x9 filters. 

To solve the training problem we minimize Eq. (33) in the paper jointly in $\theta, \lbrace p_i\rbrace_{i=1}^N$. We do this efficiently by taking steps toward the optimal weights with the 'Adam' optimization procedure \cite{kingma_adam:_2015} with a step size $\tau = 0.1$ (although gradient descent with momentum or FISTA \cite{beck_fast_2009} are also valid options). We use a standard accelerated primal-dual algorithm \cite{chambolle_first-order_2011} to solve the convex inference problem.
For the iterative procedure we repeat this process, computing $x(\theta^k)$ after every minimization of Eq.(33), inserting it as a factor into $E^*$ and repeating the optimization. If the iterative procedure increases the loss value, we reduce the step size $\tau$ of the majorizing problem and repeat the step. If reducing the step size does not successfully improve the result for several iterations, we terminate the algorithm.

We implement this setup in PyTorch \cite{paszke_automatic_2017} and refer to our reference implementation for further details.

For total variation denoising, which corresponds to choosing $D(\theta)$ as the gradient operator with appropriate scaling, $\alpha\nabla$, we use grid search to find the optimal scaling parameter $\alpha$.

We report execution times for a single minimization of Eq.(33) for different filter sizes in Table 1 in the paper as well as total time for an iterative procedure. These timings are reported for a single \textit{GeForce RTX 2080Ti} graphics card.

{\small
\bibliographystyle{ieee_fullname}
\bibliography{zotero_library}
}

\end{document}